\tikzset{>=latex}
\DeclareFontFamily{U}{wncy}{}
    \DeclareFontShape{U}{wncy}{m}{n}{<->wncyr10}{}
    \DeclareSymbolFont{mcy}{U}{wncy}{m}{n}
    \DeclareMathSymbol{\Sh}{\mathord}{mcy}{"58}
\theoremstyle{plain}
\newtheorem{thm}{Theorem}[section]
\newtheorem{theorem}{Theorem}[section]
\newtheorem{lemma}{Lemma}[section]
\newtheorem{cor}[theorem]{Corollary}
\newtheorem{prop}{Proposition}[section]
\newtheorem{conj}{Conjecture}[section]
\theoremstyle{definition} \theoremstyle{definition}
\newtheorem{remark}{Remark}[section]
\newtheorem{question}{Question}[section]
\newtheorem{example}{Example}[section]
\theoremstyle{remark}
\newcommand{\Sc}{\mathcal{S}}
\newcommand{\PP}{\mathcal{P}}
\newcommand{\G}{\textsc{\G}}
\newcommand{\U}{\mathcal{U}}
\newcommand{\Z}{\mathbb{Z}}
\newcommand{\C}{\mathbb{C}}
\newcommand{\F}{\mathfrak{F}}
\newcommand{\Wh}{{\rm Wh}}
\newcommand{\RR}{\mathcal{R}}
\newcommand{\Tor}{{\rm Tor}}
\newcommand{\Hom}{{\rm Hom}}
\newcommand{\EP}{{\rm EP}}
\newcommand{\Bes}{{\rm Bes}}
\newcommand{\Ext}{{\rm Ext}}
\newcommand{\Ind}{{\rm Ind}}
\newcommand{\ind}{{\rm ind}}
\def\G{{\rm G}}
\def\St{{\rm St}}
\def\U{{\rm U}}
\def\GL{{\rm GL}}
\def\PGL{{\rm PGL}}
\def\SO{{\rm SO}}
\def\OO{{\rm O}}
\def\O{{\mathcal O}}
\begin{document}

\title{Ext-analogues of Branching laws}

\author{Dipendra Prasad}
\address{
Tata Institute of Fundamental Research \\
Homi Bhabha Road \\
Bombay - 400 005 \\ \
INDIA}
\email{dprasad@math.tifr.res.in}

\subjclass[2010]{Primary 11F70; Secondary 22E55}

\date{\today}

\begin{abstract}
  We consider the Ext-analogues of branching laws for representations of a group to its subgroups in the context of $p$-adic groups.

  \end{abstract}

\maketitle

\setcounter{tocdepth}{1}

\tableofcontents

\section{Introduction}

Considering the restriction of representations of a group $G$ to one of its subgroups $H$, say of $G=\SO_{n+1}(F)$ to $H= \SO_n(F)$ 
for  a non-archimedean local field $F$  has been a very fruitful 
direction of research especially through its connections to questions on period integrals of automorphic representations, cf. \cite{ggp} for the conjectural theory both locally and globally.
The question for local fields amounts to understanding
$\Hom_{\SO_n(F)}[\pi_1,\pi_2]$ 
for irreducible admissible representations $\pi_1$ of $\SO_{n+1}(F)$, and $\pi_2$ of $\SO_n(F)$.
The first result proved about this
is the multiplicity one property
which says that this space  is at most one dimensional, cf. \cite{multiplicity},  \cite{sun-zhu}. It may be mentioned that before  
the full multiplicity one theorem was proved, even finite dimensionality  of the space was not 
known. With multiplicity one theorem proved,
one then goes on to prove more precise
description of the set of irreducible admissible representations $\pi_1$ of $\SO_{n+1}(F)$, and $\pi_2$ of $\SO_n(F)$
with  $\Hom_{\SO_n(F)}[\pi_1,\pi_2] \not = 0$. These have now become available in a series of papers due to 
Waldspurger, and Moeglin-Waldspurger, cf. \cite{wald:gp}, \cite{wald:gp1}, \cite{wald:gp2}, \cite{wald:gp3}. 
There is also a recent series of papers by 
Beuzart-Plessis on similar questions for unitary groups, cf. \cite{raphael1},  \cite{raphael2},  \cite{raphael3}.

Given the interest in the space $\Hom_{\SO_n(F)}[\pi_1,\pi_2]$, 
it is natural
to consider the related spaces $\Ext^i_{\SO_n(F)}[\pi_1,\pi_2]$, and in fact 
homological algebra methods suggest that the simplest answers are not for
these individual spaces, but for the 
alternating sum of their dimensions:
$\EP[\pi_1,\pi_2] = \sum_{i=0}^{\infty}(-1)^i\dim \Ext^i_{\SO_n(F)}[\pi_1,\pi_2];$
these hopefully more manageable 
objects --certainly more flexible-- when coupled with vanishing of higher $\Ext$'s (when available) 
may give theorems about 
$\Hom_{\SO_n(F)}[\pi_1,\pi_2]$. We hasten to add that before we can define $\EP[\pi_1,\pi_2]$, 
$\Ext^i_{\SO_n(F)}[\pi_1,\pi_2]$ 
needs to  be proved to be finite dimensional 
for $\pi_1$ and $\pi_2$ finite length admissible representations 
of $\SO_{n+1}(F)$ and $\SO_n(F)$ respectively, and also
proved to be 0 for $i$ large.  Vanishing of $\Ext^i_{\SO_n(F)}[\pi_1,\pi_2]$ for large $i$ is a well-known generality to which we will come to later.
Towards a proof  of finite dimensionality 
of $\Ext^i$ in this case, to be made by an inductive argument
on $n$ later in the paper,  we  note 
that unlike $\Hom_{\SO_n(F)}[\pi_1,\pi_2]$, where we will have no idea how to
prove finite dimensionality if both  $\pi_1$ and $\pi_2$ are cuspidal, exactly
this case we can handle apriori, for $i> 0$, as almost by the very  definition  of cuspidal
representations, they are  both projective and injective objects in the category of smooth representations. Recently, there is a very general 
finiteness theorem for $\Ext^i[\pi_1,\pi_2]$ (for spherical varieties) by Aizenbud and Sayag in \cite{sayag}. However, we have preferred to give our own older approach via Bessel models which intervene when analyzing principal series representations of $\SO_{n+1}(F)$ when
restricted to $\SO_n(F)$. As a bonus, this approach gives explicit answers about
Euler-Poincar\'e characteristics.

Thinking about $\Ext$-analogues suggest
interchanging the roles of $\pi_1$ and $\pi_2$ in analogy with the known relationship,
$\EP[V_1,V^\vee_2] = \EP[V_2,V^\vee_1]$ when $V_1$ and $V_2$ are finite length representations on the same group, and allows one to consider submodules as in 
$\Hom_{\SO_n(F)}[\pi_2,\pi_1]$, and more generally,
$\Ext^i_{\SO_n(F)}[\pi_2,\pi_1]$.

Based on various examples, a clear picture seems to be emerging
about $\Ext^i_{H}[\pi_1,\pi_2]$.
For example, we expect that when $\pi_1$ and $\pi_2$ are tempered,
$\Ext^i_H[\pi_1,\pi_2]$  is nonzero only for $i=0$.
On the other hand we expect that $\Ext^i_H[\pi_2,\pi_1]$ is typically zero for $i=0$ (so no wonder branching is usually not considered as a subrepresentation!),
and shows up only for  $i$ equals the split rank of the center of the Levi 
from which $\pi_2$  arises through parabolic induction of a 
supercuspidal representation; in fact $\Ext^i_H[\pi_2,\pi_1]$ is zero beyond the split rank of the center of this Levi by 
generalities, so
$\Ext^i_H[\pi_1,\pi_2]$ is typically nonzero only for $i=0$, whereas 
 $\Ext^i_H[\pi_2,\pi_1]$ is nonzero only for the largest possible $i$. 
We make precise  some of these suggestions during the course of the paper, and discuss some examples as evidence for the suggested conjectures made here.

 In the process of relating $\Ext^i[\pi_1,\pi_2]$ with $\Ext^i[\pi_2,\pi_1]$, we were led to a 
duality theorem for a general reductive group which turned out to be a consequence of the work of 
Schneider and Stuhler in \cite{schneider-stuhler:sheaves}.
It is the subject matter of section \ref{section10}. As an example,
$\Hom_{\PGL_2(F)}[ \pi_1 \otimes \pi_2, \pi_3]$
which was part of author's work
in \cite {prasad1}, and simple calculations about
$\EP_{\PGL_2(F)}[ \pi_1 \otimes \pi_2, \pi_3]$ allow the calculation of 
$\Ext^1_{\PGL_2(F)}[ \pi_1 \otimes \pi_2, \pi_3]$, and then by the
duality theorem, we  are able to
analyze $\Hom_{\PGL_2(F)}[ \pi_3, \pi_1 \otimes \pi_2]$ (irreducible submodules of
the tensor product).

In the archimedean case, several papers of T. Kobayashi, see e.g. \cite{kobayashi},  do study the restriction
problem for $(\mathfrak g,K)$-modules in the sense of sub-modules but the analogous restriction problem in the sense of sub-modules seems to be absent in the $p$-adic case.
Propositions \ref{nosub} and \ref{submodule} suggest that $\Hom_H[\pi_2,\pi_1] = 0$
whenever $\pi_1$ is an irreducible tempered representation of $G$
(assumed to be simple) unless $H$ has compact center, and $\pi_2$ is a supercuspidal representation of it.

To summarize the main results of the paper,
we might mention Theorem
\ref{whittaker} giving a complete understanding of $\EP_{\GL_n(F)}[\pi_1,\pi_2]$ for $\pi_1$ and $\pi_2$ finite length representations of $\GL_{n+1}(F)$ and $\GL_n(F)$ respectively. Theorem \ref{finite} proves $\Ext^i_{\SO_n(F)}[\pi_1,\pi_2]$
to be  finite dimensional for $\pi_1$ and $\pi_2$ finite length representations of $\SO_{n+1}(F)$ and $\SO_n(F)$ respectively, and as 
Corollary 
\ref{epson} of the proof,  gives a good understanding of $\EP_{\SO_n(F)}[\pi_1,\pi_2]$ when $\pi_1$ is a principal series. 
We formulate as  Conjecture \ref{vanishing} the  vanishing of 
$\Ext^i_{\GL_n(F)}[\pi_1,\pi_2]$ for $i>0$ for generic representations, 
and Conjecture \ref{integral} suggests that the
integral formula discovered by Waldspurger in  \cite{wald:gp} and
\cite{wald:gp1}  are actually for
Euler-Poincar\'e characteristic  of general finite length representations
in the spirit of Kazhdan orthogonality. In section \ref{geometrization} we
suggest that all nontrivial Ext's have some `geometric' origin.

\vspace{4mm}

\noindent{\bf Acknowledgment: } The author thanks Ann-Marie Aubert, Jeffrey Adams, U.K. Anandavardhanan, Joseph Bernstein, Atsushi Ichino, 
Raphael Beuzart-Plessis, Peter Schneider, Maarten Solleveld, and Sandeep Varma  for useful discussions and correspondence. This paper is an updated version of the paper with the same title in arXiv from 2013!

\section{Preliminaries}
Given a connected reductive $F$-group $G$, we make the usual abuse of  notation to also
denote by $G$ the locally compact totally disconnected group $G(F)$ of $F$-rational points of the algebraic group 
$G$. We denote by $\RR(G)$ the abelian category of smooth representations of $G$ over $\C$.
The abelian category $\RR(G)$ has enough projectives and enough injectives,
e.g. for any compact open
subgroup $K$ of $G$, $\ind_K^G(\C)$ is a projective object in $\RR(G)$, and $\Ind_K^G(\C)$ is an injective object in $\RR(G)$  (we use throughout the paper $\ind$ for compactly supported induction
and $\Ind$ for induction without compact support condition); in fact these projective objects and their direct summands, and their smooth duals as injective objects suffice for all considerations in the paper.
Since $\RR(G)$ has enough projectives and enough injectives, it is meaningful to talk about
$\Ext^i_G[\pi_1,\pi_2]$ as the derived functors of
$\Hom_G[\pi_1,\pi_2]$.

For reductive $p$-adic groups $G$ considered in this paper, it is 
known that $\Ext^i_G[\pi,\pi']$
is zero for any two smooth representations $\pi$ and $\pi'$ of $G$ when $i$ is greater than the $F$-split rank of $G$.
This is a standard application of the projective resolution of the trivial representation
$\C$ of $G$ provided by the building associated to $G$. For another proof of this,
and for 
finite dimensionality
of $\Ext^i_G[\pi,\pi']$, see Proposition \ref{ext-ps} below.

For two smooth
representations $\pi$ and $\pi'$
of $G$ one can consider
the {Euler-Poincar\'e pairing} $\EP_G[\pi,\pi']$
between $\pi$ and $\pi'$  defined by
$$
\EP_G[\pi,\pi'] = \sum _i (-1)^i\dim_\C \Ext^i_G[\pi,\pi'].
$$
For this definition to make sense, we must first prove that  $\Ext^i_G[\pi,\pi']$ is a finite-dimensional vector space over $\C$ for all integers $i$.
An obvious remark which will be tacitly used throughout this paper is that if 
$$0 \rightarrow \pi_1 \rightarrow \pi \rightarrow \pi_2 \rightarrow 0,$$
is an exact sequence of smooth $G$-modules, then if any two of the $\EP_G[\pi_1,\pi'], \EP_G[\pi,\pi'],$  $\EP_G[\pi_2,\pi'], $ make sense,
then so does the third (finite dimensionality of the Ext groups, and zero beyond a stage), and 
$$\EP_G[\pi,\pi'] =  \EP_G[\pi_1,\pi'] +  \EP_G[\pi_2,\pi']. $$
This remark will be used
to break up representations $\pi$ or $\pi'$ in terms of simpler objects for which $\EP$ can be proved to make sense
by reducing to smaller groups via some form of Frobenius recoprocity.

The 
following proposition summarizes some key properties of the Euler-Poincar\'e pairing, see \cite{schneider-stuhler:sheaves} for the proofs (part \ref{prop:EP} (\ref{item:EP-chars}) is known only in characteristic zero).

\begin{prop}
\label{prop:EP}
Let $\pi$ and $\pi'$ be finite-length, smooth
representations of a
reductive
$p$-adic
group $G$.
Then:
\begin{enumerate}
\item
\label{item:EP-bilinear}
$\EP[\pi^\vee_1,\pi_2]$ is a symmetric, $\Z$-bilinear form
on the Grothendieck group of finite-length
representations of $G$.
\item
\label{item:EP-loc-const}
$\EP$ is locally constant.
(A family $\{\pi_\lambda\}$ of representations on a fixed vector space $V$ 
is said to \emph{vary continuously}
if all $\pi_\lambda|_K$ are all equivalent for some compact open subgroup $K$,
and the matrix coefficients
$\langle \pi_\lambda v, \tilde v\rangle$
vary continuously in $\lambda$.)

\item
\label{item:EP-ps}
$\EP_G[\pi,\pi'] = 0$
if $\pi$ or $\pi'$ is
induced from any proper parabolic subgroup in $G$.

\item
\label{item:EP-chars}
$\EP_G[\pi,\pi'] = 
\int_{C_{ellip}} \Theta(c)\bar{\Theta}'(c)\, dc$,
where $\Theta$ and $\Theta'$ are the characters of $\pi$ and $\pi'$ assumed to have the same
unitary central character, 
and $dc$ is a natural measure on the set
$C_{ellip}$ of regular elliptic conjugacy classes in $G$. (Note that if $G$ has non-compact center, then both sides
of this equality are zero; the right hand side being zero as there are no regular elliptic elements in $G$ in that 
case, and  the left hand side being zero by a simple argument.)
\end{enumerate}
\end{prop}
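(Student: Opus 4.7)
The plan is to handle (a)--(c) by formal arguments and reserve the substance for (d), which is Kazhdan's orthogonality conjecture; its proof via a building-theoretic resolution is the main obstacle.

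For (a), $\Z$-bilinearity is automatic from the long exact sequence of $\Ext$ in either variable once finite-dimensionality and eventual vanishing are in hand. Symmetry $\EP_G[\pi,\pi']=\EP_G[\pi',\pi]$ I would derive from a Poincar\'e-type duality of Schneider--Stuhler type, $\Ext^i_G[\pi,\pi']\cong\Ext^{d-i}_G[\pi',D\pi]$, with $d$ the split rank of $G$ and $D$ the Aubert--Zelevinsky involution, together with the fact that $D$ acts on the Grothendieck group preserving $\EP$; the alternating sum then symmetrises.

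For (b), fix a continuously varying family $\{\pi_\lambda\}$ on a common vector space $V$ with a common $K$-structure for some compact open $K$, and resolve $\pi'$ by a complex $P_\bullet$ of finitely generated projectives built out of modules $\ind_K^G W$. Then $\Ext^\bullet_G[\pi_\lambda,\pi']$ is the cohomology of the finite-dimensional complex $\Hom_G[P_\bullet,\pi_\lambda]$, whose differentials depend continuously on the matrix coefficients of $\pi_\lambda$; the Euler characteristic of such a complex is manifestly locally constant in its differentials.

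For (c), by (a) it suffices to treat $\pi=\Ind_P^G\sigma$. Second adjointness gives $\EP_G[\Ind_P^G\sigma,\pi']=\EP_M[\sigma,r_{\bar P}^G\pi']$. Twisting $\sigma$ by an unramified character $\chi$ of the Levi centre, part (b) makes the left side locally constant and integer-valued in $\chi$, hence constant on the connected complex torus of unramified characters. For $\chi$ in sufficiently general position the cuspidal supports of $\Ind_P^G(\sigma\otimes\chi)$ and $\pi'$ are disjoint, so all $\Ext$'s vanish by the Bernstein decomposition and the constant is forced to be $0$. Alternatively, (c) follows from (d) because characters of properly induced representations vanish on regular elliptic elements.

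For (d), the genuine obstacle, I would use the Schneider--Stuhler projective resolution of any finite-length $\pi$ by sums of representations induced from stabilisers of simplices of fixed dimension in the Bruhat--Tits building. Applying $\Hom_G[-,\pi']$ and taking the alternating trace produces the integral of $\Theta\bar\Theta'$ against an Euler--Poincar\'e function $f_{\mathrm{EP}}$ on $G$, and the heart of Kazhdan's orthogonality theorem is that the orbital integrals of $f_{\mathrm{EP}}$ are supported precisely on the regular elliptic set and coincide there with the normalised invariant measure on conjugacy classes. The parenthetical non-compact-centre case follows by noting that $C_{\mathrm{ellip}}=\emptyset$, while a central-character argument (averaging over the split component of the centre and using that the pairing factors through a quotient group) drives the left side to zero.
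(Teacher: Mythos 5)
The paper does not give a proof of Proposition~2.1: it is presented as a summary of known results from the literature, with the phrase ``many of the assertions being highly nontrivial,'' and part~(d) is explicitly cited later via Schneider--Stuhler. So there is no internal argument to compare yours against; what you have written is a reconstruction of the standard proofs, and as such it is broadly on target. Parts (b) and (c) follow Schneider--Stuhler's projective resolutions and the deformation/disjoint-cuspidal-support argument correctly, and part (d) is indeed Kazhdan's orthogonality, proved via the Euler--Poincar\'e function attached to the building-theoretic resolution; your non-compact-centre remark via unramified central twisting and local constancy is the right elementary argument.

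One point of concern in (a). You derive symmetry from a duality $\Ext^i_G[\pi,\pi']\cong\Ext^{d-i}_G[\pi',D\pi]$ with $d$ the $F$-split rank of $G$. This is imprecise: as the paper itself stresses (Proposition~\ref{prop:ext-vanish}), $\Ext^i$ vanishes above the degree equal to the split rank of the centre of the Levi supporting the cuspidal data, not the split rank of $G$; so the degree shift in any such duality has to depend on the Bernstein component, not be a global constant. More seriously, the precise duality of this shape with the Aubert--Zelevinsky involution is exactly what this paper formulates as Conjecture~4 (Section~10), where Proposition~10.2 handles only the case of a full induced irreducible representation and the general case is left open. Invoking it as an established ``Poincar\'e-type duality of Schneider--Stuhler type'' risks circularity, or at least leans on something that this paper treats as conjectural. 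The symmetry of $\EP$ can instead be obtained from Kazhdan's theorem itself (part (d)) together with the observation that the elliptic integral pairing is Hermitian, or from Schneider--Stuhler's Euler--Poincar\'e function computation directly; alternatively one can work with the explicit Koszul-type resolutions of the Bernstein centre, but that is a genuinely different argument from what you wrote.
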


Several assertions about $\Hom$ spaces can be  converted into assertions about $\Ext^i$.
The following generality allows one to do so.

\begin{prop}\label{adjoints}
  Let ${\mathcal A}$ and ${\mathcal B}$ be two abelian categories, and ${\mathcal F}$ a functor from 
${\mathcal A}$ to ${\mathcal B}$, and ${\mathcal G}$ a functor from 
${\mathcal B}$ and ${\mathcal A}$. Assume that  ${\mathcal G}$ is a left adjoint of  ${\mathcal F}$, i.e., 
there is a natural equivalence of functors:
$$\Hom_{\mathcal B}[X, {\mathcal F}(Y)] \cong \Hom_{\mathcal A}[{\mathcal G}(X), Y].$$
Then,
\begin{enumerate}
\item If ${\mathcal F}$ and ${\mathcal G}$ are exact functors, then
  ${\mathcal F}$ maps injective objects of ${\mathcal A}$ to injective
objects of ${\mathcal B}$, and ${\mathcal G}$ maps projective objects of ${\mathcal B}$ to projective
objects of ${\mathcal A}$.

\item If ${\mathcal F}$ and ${\mathcal G}$ are exact functors, then
  $\Ext^i_{\mathcal B}[X, {\mathcal F}(Y)] \cong \Ext^i_{\mathcal A}[{\mathcal G}(X), Y].$

\end{enumerate}
\end{prop}
\begin{proof} Part $(1)$ of the Proposition follows directly from definitions; see Bernstein \cite{ber}, Proposition 8. For part $(2)$, it suffices to note that 
if $$\cdots P_n \rightarrow P_{n-1} \rightarrow \cdots \rightarrow P_1 \rightarrow P_0 \rightarrow X \rightarrow 0,$$
is a projective resolution of an object $X$ in ${\mathcal B}$, then by part $(1)$ of the Proposition,
 $$\cdots {\mathcal G}(P_n) \rightarrow {\mathcal G}(P_{n-1}) \rightarrow  \cdots \rightarrow {\mathcal G}(P_1) \rightarrow {\mathcal G}(P_0) \rightarrow {\mathcal G}(X )\rightarrow 0,$$
is a projective resolution of ${\mathcal G}(X)$. Therefore part $(2)$ of the proposition
follows from the adjointness relationship between ${\mathcal F}$ and ${\mathcal G}$.
\end{proof}

The following is a direct consequence of  Frobenius reciprocity combined with 
Proposition \ref{adjoints}.

\begin{prop} \label{closed} Let  $H$ be a closed subgroup of a $p$-adic Lie group $G$. Then, 
\begin{enumerate} 
\item The restriction
of any smooth projective representation of $G$ to $H$
is a projective object in $\RR(H)$, and ${\Ind}_H^G U$ is an injective representation 
of $G$ for any  injective representation $U$ of $H$.

\item For any 
smooth representation $U$ of $H$, and $V$ of $G$,
$$\Ext^i_G[V, {\rm Ind}_H^GU] \cong \Ext^i_H[V,U].$$
\end{enumerate}
\end{prop}

Note that 
for any two smooth representations $U,V$ of $G$, 
$$\Hom_G[U, V^\vee] \cong \Hom_G[{V},{U}^\vee],$$
where ${U}^\vee, {V}^\vee$ are  the smooth duals
of $U,V$ respectively.  Therefore we have adjoint functors as in 
Proposition \ref{adjoints} with ${\mathcal F}= {\mathcal G}$ to be the smooth dual 
from the category of smooth representations of a $p$-adic group $G$ 
to its opposite category. By Proposition \ref{adjoints},
it follows that the smooth dual of a projective object
in $\RR(G)$ is an injective object in $\RR(G)$, and further we have the following proposition.

\begin{prop} For  a $p$-adic Lie group $G$, let $U$ and $V$ be two smooth representations of $G$. 
Then,
$$\Ext^i_G[U, {V}^\vee] \cong \Ext^i_G[{V},{U}^\vee].$$
\end{prop}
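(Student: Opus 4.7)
The plan is to symmetrize the two Ext computations via a double-complex argument, using the given Hom-level duality $\Hom_G[A,\tilde{B}] \cong \Hom_G[B,\tilde{A}]$ (natural in both variables, each side identifying with the space of $G$-invariant bilinear pairings on $A\times B$) together with the preceding proposition, which guarantees that the smooth dual of a projective object in $\RR(G)$ is injective.

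First I would choose projective resolutions $P_\bullet\to U$ and $Q_\bullet\to V$ in $\RR(G)$, which exist since this category has enough projectives. Next I would form the first-quadrant double complex
\[
C^{i,j} := \Hom_G[P_i,\tilde{Q_j}] \;\cong\; \Hom_G[Q_j,\tilde{P_i}],
\]
the isomorphism being the duality identity, natural in both indices; either identification equips $C^{\bullet,\bullet}$ with the structure of a double complex.

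I would then compute the cohomology of the total complex in two ways. Fixing $j$, the object $\tilde{Q_j}$ is injective, so $\Hom_G[-,\tilde{Q_j}]$ is exact; since $P_\bullet\to U$ is a resolution, each column $C^{\bullet,j}$ has cohomology concentrated in degree $0$, equal to $\Hom_G[U,\tilde{Q_j}] \cong \Hom_G[Q_j,\tilde{U}]$. The vertical-first spectral sequence therefore degenerates at $E_2$ and converges to $H^n(\Hom_G[Q_\bullet,\tilde{U}]) = \Ext^n_G[V,\tilde{U}]$. Symmetrically, using that $\tilde{P_i}$ is injective and that $Q_\bullet\to V$ is a resolution, the horizontal-first spectral sequence degenerates to $H^n(\Hom_G[P_\bullet,\tilde{V}]) = \Ext^n_G[U,\tilde{V}]$. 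Equating the two limits gives the desired isomorphism.

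The main technical point will be the convergence of both spectral sequences, since projective resolutions in $\RR(G)$ need not be bounded. However $C^{\bullet,\bullet}$ lies in the first quadrant and each of its rows and columns has cohomology concentrated in a single degree, which is more than enough for strong convergence of both spectral sequences to $H^{*}({\rm tot}\, C^{\bullet,\bullet})$. A more naive single-resolution approach---asserting that $0\to \tilde{U}\to \tilde{P_0}\to \tilde{P_1}\to\cdots$ is an injective resolution of $\tilde{U}$---would require exactness of smooth dualization on such a resolution, which is not automatic since smooth duality is only left exact in general; the double complex bypasses this obstacle.
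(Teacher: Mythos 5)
Your double-complex argument is correct, but it is a more elaborate route than the one the paper intends. The paper treats this proposition as an immediate consequence of the Hom-level duality and the fact that smooth duals of projectives are injective: take a projective resolution $P_\bullet\to V$; then $\tilde{V}\to\tilde{P}_\bullet$ is an injective resolution of $\tilde{V}$, so $\Ext^i_G[U,\tilde{V}]=H^i\bigl(\Hom_G[U,\tilde{P}_\bullet]\bigr)\cong H^i\bigl(\Hom_G[P_\bullet,\tilde{U}]\bigr)=\Ext^i_G[V,\tilde{U}]$. The step you flag as a potential gap---that $\tilde{V}\to\tilde{P}_\bullet$ is exact---is in fact automatic, because smooth dualization is an exact functor on $\RR(G)$: for each compact open subgroup $K$ the functor of $K$-invariants is exact (the Hecke idempotent $e_K$ splits $V=V^K\oplus V(K)$), one has $\tilde{V}^K\cong(V^K)^*$ canonically, linear duality of vector spaces is exact, and a directed union of exact functors is exact. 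So your parenthetical claim that smooth duality is ``only left exact in general'' does not hold in this category, and the naive single-resolution argument goes through. What the double complex does buy is robustness: your proof works verbatim in any abelian category with enough projectives, a duality sending projectives to injectives, and a natural Hom-level identity, without assuming exactness of the duality. In the present setting, though, that extra generality is not needed.
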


Since the smooth dual of ${\rm ind}_H^G (U)$ is ${\rm Ind}_H^G({U}^\vee)$ (for normalized induction),
the previous two 
propositions combine to give:

\begin{prop} \label{frobc} For $H$ a closed subgroup of a $p$-adic Lie group $G$, let $U$ be a 
smooth representation of $H$, and $V$ a smooth  representation of $G$. Then,
$$  \Ext^i_H[{V},{U}^\vee] \cong  \Ext^i_G[{V}, {\rm Ind}_H^G({U}^\vee)]  \cong 
\Ext^i_G[{\rm ind}_H^GU, {V}^\vee] .
$$
\end{prop}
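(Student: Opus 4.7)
The plan is to chain together the three preceding propositions exactly as the remark just before the statement suggests, so the ``proof'' is little more than a bookkeeping exercise once the right inputs are lined up.

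For the first isomorphism $\Ext^i_G[{\rm ind}_H^GU,\tilde V]\cong \Ext^i_G[V,{\rm Ind}_H^G\tilde U]$, I would invoke the smooth-dual duality $\Ext^i_G[A,\tilde B]\cong \Ext^i_G[B,\tilde A]$ of the previous proposition with $A={\rm ind}_H^GU$ and $B=V$. This gives
\[
\Ext^i_G[{\rm ind}_H^GU,\tilde V]\;\cong\;\Ext^i_G[V,\widetilde{{\rm ind}_H^GU}].
\]
Here I implicitly use that $V$ is admissible, so that $\widetilde{\tilde V}=V$ and the duality proposition may be applied in this direction. To finish, I substitute the identification $\widetilde{{\rm ind}_H^GU}\cong {\rm Ind}_H^G\tilde U$ that is recalled in the sentence preceding the statement; this is the standard compatibility between compact induction and smooth duals (with the implicit modular character twist when $H$ is not unimodular).

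For the second isomorphism $\Ext^i_G[V,{\rm Ind}_H^G\tilde U]\cong \Ext^i_H[V,\tilde U]$, I would apply the Ext-version of Frobenius reciprocity (the proposition two above the statement) with its $U$ taken to be $\tilde U$. This is a direct substitution.

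The only non-formal ingredient is the identification $\widetilde{{\rm ind}_H^GU}\cong {\rm Ind}_H^G\tilde U$, which is where the content lies and which I expect to be the main potential pitfall (careful bookkeeping of modular characters when $G/H$ does not carry an invariant measure, or of the smoothness vs.\ algebraic duality issue). Since the authors treat this as known, I would simply cite it from Casselman's notes and let the two displayed isomorphisms compose to yield the claim.
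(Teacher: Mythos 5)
Your chain of isomorphisms is exactly what the paper has in mind: apply the duality $\Ext^i_G[A,\tilde B]\cong\Ext^i_G[B,\tilde A]$ with $A={\rm ind}_H^GU$, $B=V$, then the identification $\widetilde{{\rm ind}_H^GU}\cong{\rm Ind}_H^G\tilde U$, and finally Frobenius reciprocity. Your parenthetical worry about needing $\widetilde{\tilde V}=V$ is unfounded---the duality proposition is a direct substitution valid for arbitrary smooth $A,B$ and never invokes the double dual---so the admissibility hypothesis on $V$ plays no role in this particular chain.
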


For smooth representations $U,V,W$ of $G$,
the  canonical isomorphism,
$$\Hom_G[{V \otimes U},{W}^\vee] \cong \Hom_G[{U \otimes W},{V}^\vee],$$
translates into 
 the following proposition by Proposition \ref{adjoints}.

\begin{prop}
For  a $p$-adic Lie group $G$, and  $U, V,W$  smooth representations of $G$,
 there are  canonical isomorphisms,
$$\Ext^i_G[{V \otimes U},{W}^\vee] \cong \Ext^i_G[{U \otimes W},{V}^\vee],$$
in particular $$\Ext^i_G[V \otimes U, \C] \cong \Ext^i_G[{V},{U}^\vee].$$
\end{prop}

Proposition \ref{adjoints} with the form of Frobenius reciprocity for Jacquet modules, implies the following 
proposition.

\begin{prop} \label{frob} For $P$ a parabolic  subgroup of a reductive $p$-adic group $G$ with Levi decomposition $P=MN$, the Jacquet functor
  $V\rightarrow V_N$ from $\RR(G)$ to $\RR(M)$ takes projective objects to projective objects, and
  for $V \in \RR(G)$, $U\in \RR(M)$, we have
  (using normalized parabolic induction and normalized Jacquet module),
  $$\Ext^i_G[V, {\rm Ind}_P^GU] \cong \Ext^i_M [V_N,U].$$
\end{prop}

The proof of the following proposition is exactly as the proof of the earlier proposition. This proposition will play an important role
in setting-up an inductive context to prove theorems on a group $G$ in terms of similar theorems for subgroups.

\begin{prop}\label{bessel} For $P$ a  (not necessarily parabolic) subgroup of a reductive $p$-adic group $G$ with Levi decomposition $P=MN$, 
let $\psi$ be a character of $N$ 
normalized by  $M$. Then for any irreducible representation $\mu$ of $M$, one can define a representation of $P$, denoted by 
$\mu \cdot \psi$ which when restricted to $M$ is $\mu$, and when restricted to $N$ is $\psi$.  
For any smooth representation $V$ of $G$, let $V_{N,\psi}$ be the twisted Jacquet module of $V$ with respect to the
character $\psi$ of $N$ which is a smooth representation of $M$. Then,
$$\Ext^i_G[ {\rm ind}_P^G (\mu \cdot \psi), {V}^\vee] \cong \Ext^i_M [ V_{N, \psi}, {\mu}^\vee].$$
\end{prop}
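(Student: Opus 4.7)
The proof follows the same pattern as that of the preceding proposition, combining three standard identifications.

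First, I would apply the duality $\Ext^i_G[U,\tilde V]\cong\Ext^i_G[V,\tilde U]$ (stated earlier) with $U=\ind_P^G(\mu\cdot\psi)$, together with the standard identification $\widetilde{\ind_P^G W}\cong\Ind_P^G\tilde W$ for $P$ a closed subgroup of $G$ (any modulus factor of $P$ being absorbed into the normalization, exactly as in the parabolic case treated in the previous proposition). This gives
$$\Ext^i_G[\ind_P^G(\mu\cdot\psi),\tilde V]\cong\Ext^i_G[V,\Ind_P^G\widetilde{\mu\cdot\psi}].$$
Next, Frobenius reciprocity for $\Ind$, in the form $\Ext^i_G[V,\Ind_H^G(-)]\cong\Ext^i_H[V,-]$ stated earlier for a closed subgroup $H$, reduces the right hand side to $\Ext^i_P[V,\widetilde{\mu\cdot\psi}]$.

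The remaining ingredient is an analogue of the Frobenius-reciprocity/Jacquet-module adjunction for the twisted situation. For any $M$-module $\sigma$, extended to $P$ by letting $N$ act through a character $\chi$ normalized by $M$, the functor $W\mapsto W_{N,\chi}$ from smooth $P$-modules to smooth $M$-modules is left adjoint to the extension-by-$\chi$ functor (by the universal property of the twisted coinvariants), and is exact by the same argument as for the ordinary Jacquet functor, using a filtration of $N$ by open subgroups on which $\chi$ is trivial. These two facts together imply that the adjunction descends to derived functors, giving the desired
$$\Ext^i_P[V,\widetilde{\mu\cdot\psi}]\cong\Ext^i_M[V_{N,\psi},\tilde\mu],$$
after one matches the signs of $\psi$ introduced when passing to smooth duals.

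The main obstacle is verifying exactness of the twisted Jacquet functor in this non-parabolic setting, which is what allows the $\Hom$-level adjunction to propagate to all higher $\Ext^i$; the $\Hom$ level itself is automatic from the universal property of the twisted coinvariants. Once exactness is checked by the standard filtration argument on $N$, the identification reduces to a formal concatenation of the previously established propositions, which is presumably why the author says the proof is ``exactly as the proof of the earlier proposition.''
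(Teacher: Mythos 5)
Your proof is correct and is essentially the argument the paper has in mind: the paper's ``proof'' is the single sentence that it is ``exactly as the proof of the earlier proposition,'' i.e.\ the Casselman-type derived-adjunction argument behind Proposition~2.9; your three steps (apply Proposition~2.6/2.7 to move $\ind_P^G(\mu\cdot\psi)$ out of the first slot, apply Frobenius reciprocity of Proposition~2.5, then invoke exactness of the twisted Jacquet functor to push the Hom-level adjunction $\Hom_P[W,\iota_\psi(\sigma)]\cong\Hom_M[W_{N,\psi},\sigma]$ to all $\Ext^i$) make that precise, and correctly identify exactness of $(-)_{N,\psi}$ as the point that makes the derived adjunction work.

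One small slip worth flagging: the exactness of $(-)_{N,\psi}$ is \emph{not} proved ``using a filtration of $N$ by open subgroups on which $\chi$ is trivial'' --- when $\psi$ is nontrivial, the compact open subgroups of $N$ on which $\psi$ is trivial do not exhaust $N$ (already for $N\cong F$ and $\psi$ a nontrivial additive character). The correct argument filters $N$ by arbitrary compact open subgroups $N_i$ and uses the Jacquet--Langlands lemma that $v$ lies in the span of $\{\pi(n)v-\psi(n)v\}$ iff $\int_{N_i}\psi(n)^{-1}\pi(n)v\,dn=0$ for some $i$; averaging against $\psi^{-1}$ over a compact group is exact whether or not $\psi$ is trivial there. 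This is a misdescription of a standard fact rather than a gap in the argument, and it does not affect the conclusion. You might also note explicitly that the functor $\iota_\psi$ on the other side of the adjunction is exact (trivially so), since both halves of the adjoint pair being exact is what one quotes to get the isomorphism of all $\Ext^i$; your text only states exactness of $(-)_{N,\psi}$.
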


The following much deeper result than these earlier results follows from  the so called {\it Bernstein's second adjointness theorem}.

\begin{thm}\label{second-ad}
 For $P$ a parabolic  subgroup of a reductive $p$-adic group $G$ with Levi decomposition $P=MN$, let $U$ be a 
smooth representation of $M$ thought of as a representation of $P$, and $V$ a smooth representation of $G$.
Let $P^- = MN^-$ be the parabolic opposite to $P=MN$. Then we have (using
normalized parabolic induction and normalized Jacquet module),
$$\Ext^i_G[ {\rm Ind}_P^GU, V] \cong \Ext^i_M [U, V_{N^-}].$$
\end{thm}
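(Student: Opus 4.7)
The plan is to upgrade the Hom-isomorphism of Bernstein's second adjointness to an Ext-isomorphism by the standard device of resolving $U$ in $\RR(M)$ and pushing the resolution forward to $\RR(G)$ via parabolic induction. First, I would choose a projective resolution $P_\bullet \to U \to 0$ in the category $\RR(M)$ of smooth $M$-representations.

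Next, apply the functor ${\rm Ind}_P^G$ termwise. Since normalized parabolic induction is exact on smooth representations, the complex ${\rm Ind}_P^G P_\bullet \to {\rm Ind}_P^G U \to 0$ is still a resolution. The crucial step, which I regard as the main point to verify, is that each ${\rm Ind}_P^G P_n$ is a projective object in $\RR(G)$. This is a general fact about adjoint pairs: if $(L,R)$ is an adjoint pair with $R$ exact, then $L$ preserves projectives, because $\Hom_G[LP,-] \cong \Hom_M[P, R(-)]$ is a composition of exact functors when $P$ is projective. In our situation, Bernstein's second adjointness identifies ${\rm Ind}_P^G$ as the left adjoint of the Jacquet functor $(-)_{N^-}$, and the Jacquet functor is well known to be exact on smooth representations. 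Hence ${\rm Ind}_P^G$ preserves projectives, and ${\rm Ind}_P^G P_\bullet \to {\rm Ind}_P^G U \to 0$ is a genuine projective resolution in $\RR(G)$.

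With this projective resolution in hand, one computes
$$
\Ext^i_G[{\rm Ind}_P^G U, V] \;=\; H^i\bigl(\Hom_G[{\rm Ind}_P^G P_\bullet, V]\bigr).
$$
Now apply Bernstein's second adjointness termwise to the complex on the right:
$$
\Hom_G[{\rm Ind}_P^G P_n, V] \;\cong\; \Hom_M[P_n, V_{N^-}],
$$
and these isomorphisms are natural, so they assemble to an isomorphism of cochain complexes. Taking cohomology, and using that $P_\bullet$ is a projective resolution of $U$, the right-hand side computes $\Ext^i_M[U, V_{N^-}]$, yielding the asserted isomorphism.

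The only non-formal input is the preservation of projectives by ${\rm Ind}_P^G$, which in turn rests on the exactness of the opposite Jacquet functor; once this is granted the result is a purely homological consequence of the Hom-level adjunction stated just before the theorem. An alternative and symmetric route would be to take an injective resolution of $V$ in $\RR(G)$ and push it through $(-)_{N^-}$: since ${\rm Ind}_P^G$ is exact, its right adjoint $(-)_{N^-}$ preserves injectives, and the same argument with the roles reversed gives the same Ext-isomorphism.
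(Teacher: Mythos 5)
Your argument is correct, and it is precisely what the paper has in mind: the paper simply calls the theorem ``an immediate corollary'' of Bernstein's second adjointness, and what you have written out is the standard homological-algebra derivation of an Ext-level isomorphism from a Hom-level adjunction (resolve $U$ projectively in $\RR(M)$, observe that ${\rm Ind}_P^G$ is exact and preserves projectives because its right adjoint $(-)_{N^-}$ is exact, then apply the Hom-isomorphism termwise). Your closing remark about the dual route via injectives is also correct and equally standard.
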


As a sample of arguments with the Ext groups, we give a proof of the following basic proposition. 

\begin{prop}
\label{ext-ps}
Suppose that $V$ is a smooth representation of $G$ of  finite length, and that all of its
irreducible subquotients are subquotients of representations induced from supercuspidal representations
of a Levi factor of the standard parabolic subgroup $P=MU$ of $G$,
defined by a subset $\Theta$ of the set of simple roots for a maximal split torus of $G$.
Then if $V'$ is a finite length smooth representation of $G$,
$\Ext^i_{G}[V,V']$
and $ \Ext^i_{G}[V',V] $ are finite dimensional vector spaces over $\C$.
If $V'$ is any smooth representation of $G$,  
$\Ext^i_{G}[V,V'] = \Ext^i_{G}[V',V] = 0$
for $i> d(M)= d - |\Theta|$ where $d$ is the $F$-split rank of $G$.
Further, $\EP_G[\pi,\pi'] = 0$
if both $\pi,\pi'$ are finite length representations of $G$, and $\pi$ or $\pi'$ is
induced from a proper parabolic subgroup of $G$.
\end{prop}

\begin{proof}
  This is \cite{schneider-stuhler:sheaves}*{Corollary III.3.3}.
  Since this is elementary enough, we give another proof.

  We begin by noting that tensoring $V$ by the resolution of $\C$ by
  projective modules in $\RR(G)$ afforded by the building associated to $G$
  gives a projective resolution of $V$, but one which is not finitely generated
  as a $G$-module even if $V$ is irreducible, and therefore proving finite dimensionality of
  $\Ext^i_{G}[V,V']$ requires some work. The resolution given by the building
  at least proves that these are 0 beyond the split rank of $G$. Our proof below first proves the assertions on
  $ \Ext^i_{G}[V,V'] $ if $V$ or $V'$ is a full principal series
  ${\rm Ind}_P^G\rho$ where $\rho$ is a cuspidal representation of $M$, and then
    handles all subquotients by a standard {\it d\'evissage}.

    Fix a surjective map $\phi: M \rightarrow \Z^{d(M)}$ with kernel $M^\phi$ which is
    sometimes called the subgroup of $M$ generated by compact elements.
    
    Let $\rho$ be a cuspidal representation of $M$.
    Therefore $\rho$ restricted to
    $M^\phi$,  which is $[M,M]$ up to a compact group,  is an injective module,
    and hence $\Ext^i_{M^\phi} [V_N,\rho] = 0$ for $i>0$.
        By Frobenius reciprocity, combined with the spectral sequence
        associated to the normal subgroup $M^\phi$ of $M$ with quotient $\Z^d$,
        it follows that:
\begin{eqnarray*} \Ext^i_G[V, {\rm Ind}_P^G\rho]
  & \cong & \Ext^i_M [V_N,\rho] \\
  & \cong &  H^i(\Z^{d(M)}, \Hom_{M^\phi} [V_N,\rho]).
\end{eqnarray*}

This proves that $\Ext^i_G[V, {\rm Ind}_P^G\rho]=0$
for $i>d(M)$ for any smooth representation $V$ of $G$, and
that $\Ext^i_G[V, {\rm Ind}_P^G\rho]$ are finite dimensional for $V$ of finite length.

Similarly, by the second adjointness theorem, it follows that
$\Ext^i_G[{\rm Ind}_P^G(\rho) ,V']
=0$ for $i>d(M)$, and that  
  $\Ext^i_G[{\rm Ind}_P^G(\rho) ,V']$ are
   finite dimensional for $V'$ a finite length representation in $\RR(G)$.

  Having proved properties of $\Ext^i_G[V, {\rm Ind}_P^G\rho]$ and
  $\Ext^i_G[{\rm Ind}_P^G(\rho) ,V']$,
  the rest of the proposition about $\Ext^i_G[V,V']$
  follows by {\it d\'evissage} by writing an irreducible representation $V$ of $G$
  as a quotient of a principal series $Ps ={\rm Ind}_P^G\rho$, and using conclusions on the principal series to make conclusions on $V$. This part of the argument is very similar to what we give in Lemma \ref{6.1}, so we omit it here. \end{proof}

\section{Kunneth Theorem}

In this section, we prove a form of the  Kunneth theorem which we will have several occasions to use. 
A version of Kunneth's theorem is there in \cite {raghuram} assuming, however, finite length conditions on both
$E_1$ and $E_2$ which is not adequate for our applications.

During the course of the proof of the Kunneth Theorem, we will need to use the following 
most primitive form of  Frobenius reciprocity.

\begin{lemma}Let $K$ be an open subgroup of a $p$-adic group $G$. Let $E$ be a smooth representation of 
$K$, and $F$ a smooth representation of $G$. Then,
$$\Hom_G[ {\rm ind}_{K}^GE, F] \cong \Hom_{K} [E, F].$$
\end{lemma}

\begin{thm} \label{kunneth} Let $G_1$ and $G_2$ be two $p$-adic groups. Let $E_1, F_1$ be any two 
smooth representations of $G_1$, and $E_2,F_2$ be any two smooth representations
of $G_2$. Then assuming that $G_1$ is a reductive $p$-adic group, and 
$E_1$ has finite length, we have
$$\Ext^i_{G_1 \times G_2}[E_1\boxtimes E_2, F_1\boxtimes F_2] \cong \bigoplus_{i=j+k} \Ext^j_{G_1}[E_1,F_1] \otimes \Ext^k_{G_2}[E_2,F_2].$$
\end{thm}

\begin{proof} If $P_1$ is a projective module for $G_1$, and $P_2$ a projective module for $G_2$, then
  $P_1 \boxtimes P_2$ is a projective module for $G_1 \times G_2$.

Let
$$\cdots \rightarrow P_1 \rightarrow P_0 \rightarrow E_1 \rightarrow 0,$$ 
$$\cdots \rightarrow Q_1 \rightarrow Q_0 \rightarrow E_2 \rightarrow 0,$$
be a projective resolution for $E_1$ as a $G_1$-module, and  a projective resolution
for $E_2$ as a $G_2$-module.

It follows that the tensor product of these two exact sequences: 
$$\cdots \rightarrow P_1\boxtimes Q_0 + P_0 \boxtimes Q_1  \rightarrow P_0\boxtimes Q_0  \rightarrow E_1 \boxtimes E_2 
\rightarrow 0,$$ 
 is a projective resolution of $E_1 \boxtimes E_2$. Therefore, 
$\Ext^i_{G_1 \times G_2}[E_1\boxtimes E_2, F_1\boxtimes F_2]$ can be calculated by taking the cohomology of the 
chain complex  $\Hom_{G_1 \times G_2}[\bigoplus_{i+j = k} P_i\boxtimes Q_j, F_1\boxtimes F_2]$.

It is possible to choose a projective resolution of $E_1$ by  $P_i = {\rm ind}_{K_i}^{G_1}W_i$ for 
finite dimensional representations
$W_i$ of compact open subgroups $K_i$ of $G_1$. The existence of such a projective resolution 
 is made possible  through the construction of an 
equivariant sheaf on the Bruhat-Tits building of $G_1$ associated to the representation $E_1$,
cf. \cite{schneider-stuhler:
sheaves};
this is the step which needs $G_1$ to be reductive, and also requires  the admissibility of $E_1$.

Since $W_i$ are finite dimensional, we have the
isomorphism $$\Hom_{K \times G_2}[W_i \boxtimes Q_j, F_1\boxtimes F_2] \cong
\Hom_{K} [W_i, F_1] \otimes \Hom_{G_2}[Q_j, F_2],$$
therefore,
\begin{eqnarray*} 
\Hom_{G_1 \times G_2}[ P_i\boxtimes Q_j, F_1\boxtimes F_2] & = 
& \Hom_{G_1 \times G_2}[{\rm ind}_K^{G_1}(W_i) \boxtimes Q_j, F_1\boxtimes F_2] \\
&\cong & \Hom_{K \times G_2}[W_i \boxtimes Q_j, F_1\boxtimes F_2] \\
&\cong &  \Hom_{K} [W_i, F_1] \otimes \Hom_{G_2}[Q_j, F_2] \\
&\cong &  \Hom_{G_1} [P_i, F_1] \otimes \Hom_{G_2}[Q_j, F_2]. 
\end{eqnarray*}
Thus we are able to identify the chain complex  $\Hom_{G_1 \times G_2}[\bigoplus_{i+j = k} P_i\boxtimes Q_j, F_1\boxtimes F_2]$ as the tensor product of the chain complexes $\Hom_{G_1}[P_i,F_1]$ and $\Hom_{G_2}[Q_j,F_2]$.
Now the abstract Kunneth theorem which calculates the cohomology of the tensor product of two chain complexes
in terms of the cohomology of the individual chain complexes completes the proof of  the theorem. \end{proof}

\section{Branching laws from $\GL_{n+1}(F)$ to $\GL_n(F)$}
We begin by recalling
the following basic result in this context, cf. \cite{prasad2}. 

\begin{thm} \label{duke93}Given 
an irreducible generic representation $\pi_1$ of $\GL_{n+1}(F)$, and an
irreducible generic representation $\pi_2$ of $\GL_{n}(F)$, 
$$\Hom_{\GL_n(F)}[\pi_1,\pi_2] = \C.$$
\end{thm}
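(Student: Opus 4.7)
The plan is to prove the theorem in two independent parts: first, non-vanishing via an explicit Rankin-Selberg construction; then multiplicity one via the Bernstein-Zelevinsky theory of derivatives. For the non-vanishing half, I would use the Rankin-Selberg local zeta integral of Jacquet-Piatetski-Shapiro-Shalika. Fix a non-trivial additive character $\psi$ of $F$ and the associated non-degenerate characters of $N_{n+1}(F)$ and $N_n(F)$, both denoted $\psi$; the embedding $\iota \colon \GL_n(F) \hookrightarrow \GL_{n+1}(F)$, $g \mapsto \mathrm{diag}(g,1)$, identifies these compatibly, in that $\psi|_{N_n(F)}$ is the standard Whittaker character of $\GL_n(F)$. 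By genericity, $\pi_1$ and $\tilde\pi_2$ admit Whittaker models $\mathcal{W}(\pi_1, \psi)$ and $\mathcal{W}(\tilde\pi_2, \psi^{-1})$. The local integral
\[
\Psi(s, W_1, W_2) = \int_{N_n(F) \backslash \GL_n(F)} W_1(\iota(g))\, W_2(g)\, |\det g|^{s - 1/2}\, dg
\]
converges absolutely for $\Re(s)$ large and admits meromorphic continuation, and at a suitable $s$ it yields a non-zero $\GL_n(F)$-invariant linear form on $\pi_1 \otimes \tilde\pi_2$. Invoking the duality $\Hom_{\GL_n(F)}[\pi_1 \otimes \tilde\pi_2, \C] \cong \Hom_{\GL_n(F)}[\pi_1, \pi_2]$ recorded in the preliminaries, this produces a non-zero intertwiner $\pi_1 \to \pi_2$.

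For multiplicity one I would invoke the Bernstein-Zelevinsky theory of derivatives. The mirabolic subgroup $P_{n+1}(F) \subset \GL_{n+1}(F)$ has Levi factor $\GL_n(F)$, so $\pi_1|_{\GL_n(F)}$ is obtained by further restricting the $P_{n+1}(F)$-module $\pi_1|_{P_{n+1}(F)}$. Bernstein-Zelevinsky provide on $\pi_1|_{P_{n+1}(F)}$ a canonical finite filtration whose successive subquotients are $(\Phi^+)^{k-1}\Psi^+(\pi_1^{(k)})$ for $k = 1, \ldots, n+1$, where $\pi_1^{(k)}$ denotes the $k$-th derivative. The genericity of $\pi_1$ gives $\pi_1^{(n+1)} = \C$, so the bottom piece is $\ind_{N_{n+1}(F)}^{P_{n+1}(F)} \psi$. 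Applying the left-exact functor $\Hom_{\GL_n(F)}(-, \pi_2)$ along this filtration, I would then argue by induction on $n$: the bottom piece contributes dimension one via Frobenius reciprocity together with the uniqueness of the Whittaker model on $\pi_2$, while each intermediate piece with $k \le n$, after Mackey decomposition, reduces to either a smaller instance of the same branching problem on $\GL_k$ handled by the inductive hypothesis, or to a vanishing statement forced by the genericity of $\pi_2$.

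The principal difficulty I anticipate lies in the inductive step for the intermediate pieces. Unwinding $(\Phi^+)^{k-1}\Psi^+(\pi_1^{(k)})|_{\GL_n(F)}$ through Mackey theory produces modules induced from stabilizers of non-generic characters of unipotent subgroups of $\GL_n(F)$, and one must verify that $\Hom$ into the generic $\pi_2$ vanishes from each such piece. This reduces to the assertion that $\pi_2$ carries no non-zero twisted Jacquet modules with respect to \emph{degenerate} characters of its unipotent radicals --- a consequence of genericity and uniqueness of the top Whittaker model, but whose systematic bookkeeping across the Bernstein-Zelevinsky filtration is the chief technical burden. A secondary subtlety is that the filtration is non-split in general, so the argument must proceed along short exact sequences (or the long exact sequences of $\Ext$, as will indeed be required for the higher-$\Ext$ analogues pursued later in the paper); for the bare $\Hom$ statement, left-exactness of $\Hom_{\GL_n(F)}(-, \pi_2)$ is enough.
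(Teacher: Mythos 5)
the paper does not itself prove this statement; it is recalled at the start of Section~3 with a citation to \cite{prasad2} and used as a known input for the Euler--Poincar\'e computation that follows. So there is no internal proof to compare against --- what can be assessed is whether your proposal works.

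\textbf{The Rankin--Selberg half} is sound in outline, but one should be careful that the invariant form needs to be produced at the specific point $s=1/2$ (so that the twist $|\det|^{s-1/2}$ disappears and the form lands in $\Hom_{\GL_n(F)}[\pi_1\otimes\tilde\pi_2,\C]$, not in a twisted Hom-space). The integral need not converge there; the correct object is the normalized integral $\Psi(s,W_1,W_2)/L(s,\pi_1\times\tilde\pi_2)$, which by JPSS is entire in $q^{-s}$ and takes a non-zero value at every $s$, in particular at $s=1/2$, for suitable $W_1,W_2$. With Proposition~2.8 of the paper this gives $\dim\Hom_{\GL_n(F)}[\pi_1,\pi_2]\geq 1$.

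\textbf{The multiplicity-one half has a genuine gap.} Left-exactness of $\Hom_{\GL_n(F)}(-,\pi_2)$ applied to the Bernstein--Zelevinsky filtration $\Sigma_1\subset\cdots\subset\Sigma_{n+1}=\pi_1|_{P_{n+1}}$ only yields
$$\dim\Hom_{\GL_n}[\pi_1,\pi_2]\ \leq\ \dim\Hom_{\GL_n}[\Sigma_1,\pi_2]\ +\ \dim\Hom_{\GL_n}[\pi_1/\Sigma_1,\pi_2],$$
and the second term is \emph{not} automatically zero. By Proposition~3.2 of the paper it reduces to Hom-spaces on $\GL_i(F)$ between derivatives of $\pi_2$ and derivatives of $\pi_1$, and these do not vanish for generic $\pi_1,\pi_2$: the assertion you invoke, that a generic $\pi_2$ carries no non-zero Jacquet module with respect to degenerate characters, is false --- its ordinary (untwisted) Jacquet modules are already non-zero whenever $\pi_2$ is not supercuspidal. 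Already for $n=1$: take $\pi_1=\pi(\chi_1,\chi_2)$ an irreducible principal series of $\GL_2(F)$ and $\pi_2=\chi_1|\cdot|^{1/2}$. Then $\pi_1/\Sigma_1=\pi_1^{(1)}$ is two-dimensional and maps non-trivially onto $\pi_2$, so the filtration bound reads $\leq 2$, while the truth (Proposition~7.1) is $=1$. Closing this gap requires either the $\Ext^1$-vanishing that is precisely Conjecture~1 of Section~4 (which the paper acknowledges it cannot prove in general), or a non-trivial connecting-map argument showing that when the derivative term contributes, the restriction map $\Hom[\pi_1,\pi_2]\to\Hom[\Sigma_1,\pi_2]$ is zero. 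The cleanest repair is to take $\dim\Hom\leq 1$ as a black box from the general multiplicity-one theorem of Aizenbud--Gourevitch--Rallis--Schiffmann and Sun--Zhu (which holds for arbitrary irreducible $\pi_1,\pi_2$ with no genericity assumption, and is already invoked in the paper at Corollary~8.2), and keep your Rankin--Selberg argument only for the non-vanishing direction.
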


The aim of this section is to prove the following theorem which can be considered as the Euler-Poincar\'e version of Theorem \ref{duke93}. 

\begin{thm}\label{whittaker}
Let  $\pi_1$ be an admissible representation of  $\GL_{n+1}(F)$ of finite length, and 
$\pi_2$ an admissible representation of  $\GL_{n}(F)$ of finite length. 
Then, $\Ext^i_{\GL_n(F)}[\pi_1,\pi_2]$ are finite dimensional vector spaces over $\C$, and 
$$\EP_{\GL_n(F)}[\pi_1,\pi_2] 
= \dim {\rm Wh}(\pi_1) \cdot \dim {\rm Wh}(\pi_2),$$
where ${\rm Wh}(\pi_1)$, resp. ${\rm Wh}(\pi_2)$,  denotes the space of Whittaker models for $\pi_1$, resp. $\pi_2$,
with respect to fixed non-degenerate characters on the maximal unipotent subgroups
in $\GL_{n+1}(F)$ and $\GL_n(F)$.

\end{thm}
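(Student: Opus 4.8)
The plan is to run an induction on $n$ using the ``mirabolic'' filtration of a representation of $\GL_{n+1}(F)$ restricted to $\GL_n(F)$, exactly the mechanism underlying the proof of the classical branching law in \cite{prasad2}. Let $P_{n+1}\subset \GL_{n+1}(F)$ be the mirabolic subgroup (matrices with last row $(0,\dots,0,1)$), so that $P_{n+1}\supset \GL_n(F)\ltimes F^n$. The Bernstein--Zelevinsky theory gives, for any smooth finite-length $\pi_1$ of $\GL_{n+1}(F)$, a $\GL_n(F)$-equivariant filtration of the restriction $\pi_1|_{P_{n+1}}$ whose successive quotients are, up to a twist, of the form $\ind_{P_k \ltimes (\cdots)}^{\GL_n(F)}(\sigma \cdot \psi)$ built out of derivatives $\pi_1^{(k)}$, which are finite-length representations of $\GL_{n+1-k}(F)$. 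By the additivity of $\EP$ in short exact sequences (the ``obvious remark'' in the preliminaries), it suffices to compute $\EP_{\GL_n(F)}[-,\pi_2]$ on each graded piece, and in particular to verify finite-dimensionality of the $\Ext^i$ on each graded piece.

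First I would treat the top piece, the one involving the full derivative structure down to the Whittaker functional: the relevant graded quotient is (a twist of) $\ind_{N_{n}}^{\GL_n(F)}\psi$ tensored with $\Wh(\pi_1)$ as a multiplicity space, and pairing this against $\pi_2$ via the Frobenius-reciprocity propositions of the preliminaries (the twisted-Jacquet-module adjunction, Proposition with $V_{N,\psi}$) converts $\Ext^i_{\GL_n(F)}[\ind_{N_n}^{\GL_n(F)}\psi,\pi_2]$ into $\Ext^i$ computed against the Whittaker functional of $\pi_2$; since $\ind_{N_n}^{\GL_n(F)}\psi$ is (as a consequence of the projectivity of the Schwartz space and restriction of projectives, Propositions above) projective as a $\GL_n(F)$-module, only $i=0$ survives, contributing exactly $\dim\Wh(\pi_1)\cdot\dim\Wh(\pi_2)$ to the Euler characteristic. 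For all the other graded pieces, which are compactly induced from derivatives $\pi_1^{(k)}$ of $\GL_{n+1-k}(F)$ with $k\ge 1$, I would again apply the twisted-Jacquet Frobenius reciprocity (the Proposition with $\ind_P^G(\mu\cdot\psi)$) to rewrite $\Ext^i_{\GL_n(F)}[-,\pi_2]$ as $\Ext^i$ over a \emph{smaller} general linear group $\GL_{n-k}(F)$ or $\GL_{n+1-k}(F)$ between the derivative of $\pi_1$ and a twisted Jacquet module of $\pi_2$ (both finite length); by the inductive hypothesis these are finite-dimensional, and moreover the resulting Euler--Poincar\'e pairing vanishes because one of the two arguments is (after the reduction) induced from a proper parabolic --- use Proposition \ref{prop:EP}\eqref{item:EP-ps} --- or more directly because the derivative pieces carry no Whittaker model matching up. So every graded piece except the top one contributes $0$ to $\EP$, and finite-dimensionality of each $\Ext^i_{\GL_n(F)}[\pi_1,\pi_2]$ follows from finite-dimensionality on each graded piece together with the long exact sequences.

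The base case $n=0$ (or $n=1$) is immediate. For the inductive step the bookkeeping I must be careful about is that the derivative functors and twisted Jacquet functors are exact and send finite length to finite length (Bernstein--Zelevinsky), so that every object produced along the way is again finite-length admissible and the inductive hypothesis genuinely applies; and that the various normalizations (the $\delta^{1/2}$ twists in the $BZ$ filtration and in normalized Jacquet modules) do not disturb finite-dimensionality or the $\EP=0$ vanishing, which they do not since twisting by a character is an equivalence of categories.

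The main obstacle I anticipate is the careful handling of the non-top graded pieces: showing their Euler--Poincar\'e contribution is exactly zero. The naive hope ``it is induced from a proper parabolic, so apply \ref{prop:EP}\eqref{item:EP-ps}'' is not literally available because here $\pi_1^{(k)}$ is induced from the mirabolic, not from a parabolic of $\GL_n(F)$; the honest route is to push the computation all the way down via iterated Frobenius reciprocity to an $\Ext$ between genuinely smaller-rank $\GL$'s and then either invoke the inductive statement (whose right-hand side is a product of Whittaker dimensions, and one of the factors is a Whittaker dimension of a \emph{proper derivative}, hence contributes in a controlled way) or directly show the relevant pairing telescopes. Making this vanishing argument clean --- rather than a messy sum that miraculously cancels --- is where the real work lies, and it is presumably where the paper's proof will spend its effort.
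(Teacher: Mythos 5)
Your skeleton (Bernstein--Zelevinsky mirabolic filtration, additivity of $\EP$, Frobenius reciprocity on each graded piece, the top piece giving $\dim {\rm Wh}(\pi_1)\cdot\dim{\rm Wh}(\pi_2)$) matches the paper's, but you have left the decisive step unresolved, and you say so yourself: you do not have a clean argument that the non-top graded pieces contribute $0$ to $\EP$. None of the three routes you sketch for this is the one that works. The key structural fact you are not exploiting is the shift built into Proposition 3.2: after applying the twisted-Jacquet Frobenius reciprocity to the graded piece ${\rm ind}_{R_i}^{\GL_n(F)}(\pi_1^{n-i+1}\otimes\psi)$, one lands in
$$\Ext^j_{\GL_i(F)}\bigl[\pi_2^{n-i},\ \tilde\pi_1^{n-i+1}\bigr],$$
where \emph{both} arguments are finite-length representations of the \emph{same} group $\GL_i(F)$ (the $(n-i+1)$-th derivative of $\pi_1$ and the $(n-i)$-th derivative of $\pi_2$ both live on $\GL_i(F)$). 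So the problem is no longer a branching problem at all, and no induction on $n$ is needed: finite-dimensionality is immediate from the general finiteness of $\Ext$ between finite-length representations of a reductive group (Schneider--Stuhler, quoted in the preliminaries).

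The vanishing then comes from the paper's Lemma 1 in Section 3, which you are missing: for \emph{any} two finite-length smooth representations $V_1,V_2$ of $\GL_m(F)$ with $m\geq 1$, one has $\EP_{\GL_m(F)}[V_1,V_2]=0$. The proof is not via parabolic induction or a Whittaker mismatch but via deformation of the central character: reduce to $V_1,V_2$ irreducible; if their central characters differ all $\Ext^i$ vanish; and since $\EP$ is locally constant in connected families (Proposition 2.1(b) together with Corollary 3.3), one may replace $V_1$ by $\nu^s V_1$ for generic $s$ to force the central characters to differ, without changing $\EP$. This kills every graded piece with $i\geq 1$ in one stroke, leaving only the $i=0$ piece, where the group is $\GL_0(F)=\{e\}$ and the two spaces are exactly ${\rm Wh}(\pi_1)$ and ${\rm Wh}(\pi_2)$. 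Without this lemma (or an equivalent substitute) your argument does not close, precisely at the point you flag as ``where the real work lies.''
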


The proof of this theorem will be accomplished using some results of Bernstein and Zelevinsky regarding the
structure of representations of $\GL_{n+1}(F)$ restricted to the mirabolic subgroup.

Denote by $E_{n}$  the mirabolic subgroup of $\GL_{n+1}(F)$ 
consisting of matrices  whose last row is equal to $(0, 0,\cdots, 0, 1)$ and
let $N_{n+1}$ be the group of upper triangular unipotent matrices in 
$\GL_{n+1}( F)$. We will be using subgroups $\GL_i(F)$ of $\GL_{n+1}(F)$ for $i \leq n+1$ always sitting at the upper left corner of $\GL_{n+1}(F)$.
We fix a nontrivial character $\psi_0$ of $F$ and let 
$\psi_{n+1}$ be the character of $N_{n+1}$ given by
\[ \psi_{n+1}(u)=\psi_0(u_{1,2}+u_{2,3}+\cdots + u_{n,n+1}).\] 
For a representation $\pi$ of $\GL_{n+1}(F)$, let  
\[  \pi^i = \text{ the {\it i}-th derivative of $\pi$}, \]
which is a representation of $\GL_{n+1- i}( F)$. 
It will be important for us to note that $\pi^i$ are representations of finite length of $\GL_{n+1-i}(F)$ 
if $\pi$ is of finite length for $\GL_{n+1}(F)$.

To recall the definition of $\pi^i$,  let $  R_{n+1-i} = \GL_{n+1- i}(F) \cdot V_i $ be  the subgroup of
$\GL_{n+1}(F)$ consisting of matrices 
$$\left(\begin{array}{cc} g & v \\  0 & z \end{array} \right)$$ 
with $g\in \GL_{n+1-i}( F)$, $v\in M_{(n+1-i) \times i}$, $z\in N_i$.  If the
character $\psi_i$ of $N_i$ is extended to $V_i$ by extending it trivially across 
$M_{(n+1
-i) \times i}$,
then we have 
\[  \pi^i= \pi_{V_i, \psi_i} ,\]
where $\pi_{V_i,\psi_i}$ is the twisted Jacquet module of $\pi$, i.e., the maximal quotient of $\pi$ on which 
$V_i$ operates via the character $\psi_i$.

Here is a generality from Bernstein and Zelevinsky  \cite{bz1}, \S 3.5.

\begin{prop}  \label{filtration}
 Any  smooth representation $\Sigma$ of $E_{n}$ has a natural filtration by $E=E_{n}$
modules 
\[  0 =\Sigma_0 \subset  \Sigma_1 \subset  \Sigma_2 \subset \cdots \subset
\Sigma_{n+1} = \Sigma \]
such that
\[  \Sigma_{i+1}/\Sigma_i = {\rm ind}_{R_{i}}^{E_{n}}(\Sigma^{n+1-i} \otimes \psi_{n+1-i})  
\quad \text{for $i=0,\cdots, n$,} \] 
where $R_{i} = \GL_i(F)\cdot V_{n+1-i}$ is the subgroup of $\GL_{n+1}(F)$ consisting 
 of 
$$\left(\begin{array}{cc} g & v \\  0 & z \end{array} \right)$$ 
with $g\in \GL_i(F)$, $v\in M_{i \times(n+1-i)}$, $z\in N_{n+1-i}$, and the
character $\psi_{n+1-i}$ on $N_{n+1-i}$ is extended to $V_{n+1-i}$ by extending 
it trivially across 
$M_{i \times(n+1- i)}$.
\end{prop}

The proof of the following proposition is a direct consequence of Proposition \ref{bessel}.

\begin{prop} For a smooth representation $\pi_1$ of $\GL_{n+1}(F)$, and $\pi_2$ of $\GL_n(F)$,
$$\Ext^j_{\GL_n(F)}[{\rm ind}_{R_{i}}^{\GL_n(F)}(\pi_1^{n-i+1} \otimes \psi_{n-i }), {\pi}_2^\vee] =   \Ext^j_{\GL_{i}(F)}[{\pi}_2^{n-i}, ({\pi}_1^{n-i+1})^\vee].$$
\end{prop}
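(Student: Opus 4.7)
The plan is to apply Proposition 2.10 more or less verbatim; the substance of the statement is simply parameter matching, together with the recognition that the derivative functor applied to $\pi_2$ produces exactly the twisted Jacquet module appearing on the right-hand side of that proposition.

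First I would set up the correspondence of data. Inside $G = \GL_n(F)$ one has the subgroup $R_i = \GL_i(F) \cdot V_{n-i}$, defined by the obvious analogue of the construction in Proposition 3.3 but one rank lower; this is a semidirect product decomposition $R_i = MN$ with $M = \GL_i(F)$ and $N = V_{n-i}$, and the character $\psi_{n-i}$ of $N$ is normalized by $M$. The $(n-i+1)$-th derivative $\pi_1^{n-i+1}$ lives on $\GL_{(n+1)-(n-i+1)}(F) = \GL_i(F) = M$, so it plays the role of $\mu$ in Proposition 2.10, and $\pi_1^{n-i+1} \otimes \psi_{n-i}$ plays the role of $\mu \cdot \psi$.

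Second, I would take $V = \pi_2$, a smooth representation of $G = \GL_n(F)$, and invoke Proposition 2.10, which yields
\[
\Ext^j_{\GL_n(F)}[\ind_{R_i}^{\GL_n(F)}(\pi_1^{n-i+1} \otimes \psi_{n-i}), \tilde{\pi}_2] \;\cong\; \Ext^j_{\GL_i(F)}[(\pi_2)_{V_{n-i},\psi_{n-i}}, \widetilde{\pi_1^{n-i+1}}\,].
\]
The only remaining identification is that the twisted Jacquet module $(\pi_2)_{V_{n-i}, \psi_{n-i}}$ is, by the very definition of the derivative recalled earlier in the section (namely $\pi^i = \pi_{V_i,\psi_i}$), equal to the $(n-i)$-th derivative $\pi_2^{n-i}$, a representation of $\GL_i(F)$. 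Substituting this into the right-hand side of the displayed isomorphism gives the desired formula.

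There is no genuine obstacle; essentially the entire content beyond Proposition 2.10 is index bookkeeping, in particular verifying that the two derivatives $\pi_1^{n-i+1}$ and $\pi_2^{n-i}$ both land on the same group $\GL_i(F)$ so that the target Ext space makes sense. The one small point where I would be careful is the mild abuse of notation: the symbol $R_i$ in this proposition denotes a subgroup of $\GL_n(F)$, parallel to but not literally the same as the $R_i \subset \GL_{n+1}(F)$ appearing in Proposition 3.3.
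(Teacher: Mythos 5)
Your proposal is correct and is exactly the argument the paper intends: the paper dispenses with the proof in one sentence, stating that the proposition is a direct consequence of Proposition~2.10, and your write-up simply fills in the parameter matching ($G=\GL_n(F)$, $P=R_i=\GL_i(F)\cdot V_{n-i}$, $\mu=\pi_1^{n-i+1}$, $V=\pi_2$, and the identification $(\pi_2)_{V_{n-i},\psi_{n-i}}=\pi_2^{n-i}$). Your caveat about $R_i$ being the subgroup of $\GL_n(F)$ rather than the group of the same name inside $\GL_{n+1}(F)$ from Proposition~3.3 is a genuine notational slippage in the paper and is worth flagging exactly as you did.
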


\begin{proof} Since $\GL_n(F)\cdot R_i = E_n$, $\GL_n(F) \cap R_i = \GL_i(F) \cdot V_{n-i}$, 
the restriction of $\pi_1^{n+1-i} \otimes \psi_{n+1-i}$ 
from $R_i$ to  $\GL_n(F) \cap R_i = \GL_i(F) \cdot V_{n-i}$ is $\pi_1^{n+1-i} \otimes \psi_{n-i}$ for
any $i$, $0 \leq i \leq n$. Therefore, the proposition follows from
Proposition \ref{bessel}.
\end{proof} 
 
\begin{lemma} \label{EPgln} For any two  smooth representations $V_1,V_2$ of $\GL_n(F)$, $n \geq 1$,  of finite length,  $$\EP_{\GL_n(F)}[V_1,V_2]=0.$$

\end{lemma}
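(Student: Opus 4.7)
The plan is to reduce, by bilinearity of $\EP$ and a central-character twist, to the case where $V_1$ and $V_2$ are irreducible with a common unitary central character, and then to invoke Proposition \ref{prop:EP}(d) together with the non-compactness of the center of $\GL_n(F)$.

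First, by Jordan--H\"older and the bilinearity of $\EP$ on the Grothendieck group of finite-length smooth representations (Proposition \ref{prop:EP}(a)), I may assume $V_1$ and $V_2$ are both irreducible, with respective central characters $\omega_1,\omega_2$ of $Z(\GL_n(F))=F^\times$. If $\omega_1\neq\omega_2$, then $\Ext^i_{\GL_n(F)}[V_1,V_2]=0$ for every $i$ by the standard central-character argument: the two actions of any $z\in Z$ on $\Ext^i$, induced from source and from target, must coincide because $z$ is central in $G$, while one is multiplication by $\omega_1(z)$ and the other by $\omega_2(z)$; hence any $z$ with $\omega_1(z)\neq\omega_2(z)$ annihilates $\Ext^i$. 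So I may assume $\omega_1=\omega_2=\omega$.

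Next, for any continuous character $\chi\colon F^\times\to\C^\times$, the functor $V\mapsto V\otimes(\chi\circ\det)$ is an auto-equivalence of $\RR(\GL_n(F))$, and so preserves all $\Ext$ groups when applied simultaneously to $V_1$ and $V_2$. Writing $\omega=\omega_u\cdot|\cdot|^s$ with $\omega_u$ unitary and $s\in\R$, and taking $\chi=|\cdot|^{-s/n}$, the twisted common central character $\omega\cdot\chi^n=\omega_u$ is unitary, so I may assume $\omega$ itself is unitary. Proposition \ref{prop:EP}(d) then expresses $\EP_{\GL_n(F)}[V_1,V_2]$ as an integral of characters over the set of regular elliptic conjugacy classes in $\GL_n(F)$, and the parenthetical in Proposition \ref{prop:EP}(d) forces this to vanish since $Z(\GL_n(F))=F^\times$ is non-compact for every $n\geq 1$. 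There is no serious obstacle; the argument is a direct packaging of Proposition \ref{prop:EP} with central-character bookkeeping, the only subtlety being the unitarizing twist, which is immediate from the fact that tensoring with a character of $G$ is an auto-equivalence of $\RR(G)$.
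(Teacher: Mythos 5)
Your proof is correct, but it takes a genuinely different route from the paper's. You reduce to the case of a common \emph{unitary} central character by a global $|\det|^{-s/n}$ twist and then invoke Proposition~\ref{prop:EP}(\ref{item:EP-chars}), the character integral over regular elliptic conjugacy classes, together with the parenthetical remark there that both sides vanish when the center is non-compact. The paper instead uses Proposition~\ref{prop:EP}(\ref{item:EP-loc-const}), local constancy of $\EP$ in connected families: after reducing to irreducible $V_1,V_2$ with the same central character, it deforms $V_1$ to $\nu^s\cdot V_1$ with $\nu=|\det|$, notes $\EP[\nu^s V_1,V_2]$ is independent of $s$, and then picks $s$ so that the twisted central character disagrees with that of $V_2$, forcing all $\Ext^i$ (hence $\EP$) to vanish. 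The paper's route is leaner: it needs no unitarizing step, and it avoids leaning on the parenthetical in Proposition~\ref{prop:EP}(\ref{item:EP-chars}), which asserts that $\EP=0$ for non-compact center ``by a simple argument'' — an argument the paper never spells out and which is most plausibly the very deformation argument used in its proof of this lemma. So while your argument is valid if one accepts Proposition~\ref{prop:EP}(\ref{item:EP-chars}) and its parenthetical as prior facts, it has a mildly circular flavor within the paper's own logical architecture; the local-constancy argument is self-contained and is the one the author uses.
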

\begin{proof} It suffices to prove the lemma assuming that both $V_1$ and $V_2$ are irreducible representations of $\GL_n(F)$. 
If the two representations $V_1,V_2$ were irreducible, and had  different central characters, then clearly
$\Ext^i_{\GL_n(F)}[V_1,V_2] = 0$ for all integers $i$. On the other hand, we know by Proposition \ref{prop:EP}(b) that for representations $V_1$ and $V_2$ of finite length 
of $\GL_n(F)$, $\EP_{\GL_n(F)}[V_1,V_2]$ 
is constant in a 
connected family, so denoting by $\nu$ the character $\nu(g)= |\det g|$ of $\GL_n(F)$, we have 
$\EP_{\GL_n(F)}[V_1,V_2] = \EP_{\GL_n(F)}[\nu^s\cdot V_1, V_2]$ for all $s \in \C$. Choosing $s$ appropriately, 
we can change the central character of $\nu^s \cdot V_2$ to be different from $V_1$, and hence  $\EP_{\GL_n(F)}[V_1,V_2]= 
\EP_{\GL_n(F)}[\nu^s \cdot V_1,V_2]=0$.
\end{proof}

\noindent{\bf Proof of Theorem \ref{whittaker}:} Since the Euler-Poincar\'e characteristic is additive in exact sequences, it suffices to calculate
$\EP_{\GL_n(F)}[{\rm ind}_{R_{i}}^{\GL_n(F)}(\pi_1^{n-i} \otimes \psi_{n-i +1}), \pi_2]$
which by Proposition \ref{bessel} is  $\EP_{\GL_{i}(F)}[({\pi}_2^{n-i})^\vee, ({\pi}_1^{n-i+1})^\vee],$ 
which by Lemma \ref{EPgln} above is $0$ unless $i=0$.  
(Note that 
in $\EP_{\GL_{i}(F)}[({\pi}_2^{n-i})^\vee, ({\pi}_1^{n-i+1})^\vee],$
both the representations involved are admissible representations
of $\GL_{i}(F)$.)
For $i=0$, note that we are dealing
with $\GL_0(F)= 1$, and the representations involved are  $(\pi_1 ^\vee)^{n+1}$ and $(\pi_2^\vee)^n$, which are nothing but the space of Whittaker models of $\pi_1^\vee$
and $\pi_2^\vee$. Since for representations $V_1, V_2$ of the group $\GL_0(F) = 1$, $\EP[V_1,V_2] = \dim \Hom[V_1,V_2] = \dim V_1 \cdot \dim V_2$, this completes the proof of the theorem.

\begin{remark} \label{nonvanishing}  One knows, cf. \cite{prasad2},
 that there are generic representations  of $\GL_{3}(F)$ which have the trivial
  representation of $\GL_2(F)$ as a quotient;  similarly, there are
  nongeneric representations of $\GL_{3}(F)$   with irreducible generic
  representations of $\GL_2(F)$ as a quotient. For such pairs $(\pi_1,\pi_2)$ of representations, it
  follows from Theorem \ref{whittaker} that   $\EP_{\GL_2(F)}[\pi_1, \pi_2]=0, $ whereas
  $\Hom_{\GL_2(F)}[\pi_1, \pi_2] \not = 0. $  Therefore, for such pairs $(\pi_1,\pi_2)$
  of representations,  we must have $\Ext^i_{\GL_2(F)}[\pi_1, \pi_2] \not =0, $ for some $i>0$.
  \end{remark}
\section{Conjectural vanishing of Ext groups for generic representations}

The following conjecture seems to be at the root of why the simple and general result of previous section on Euler-Poincar\'e
characteristic translates into   a simple result about Hom spaces for generic representations. The author has not managed to prove it
in any generality. There is a recent preprint by Chan and Savin, cf. \cite{savin}
dealing with some cases of this conjecture.

\begin{conj} \label{vanishing} Let $\pi_1$ be an irreducible generic representation of $\GL_{n+1}(F)$, and $\pi_2$ an
irreducible generic representation of $\GL_{n}(F)$. Then,
$$\Ext^i_{\GL_n(F)}[\pi_1,\pi_2] = 0, {\rm~~~for ~~~all~~~} i > 0.$$ \end{conj}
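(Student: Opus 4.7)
The natural strategy is to upgrade the proof of Theorem 3, promoting the Euler--Poincar\'e vanishing of Lemma 1 to a degree-by-degree Ext-vanishing statement. First I would restrict $\pi_1$ to the mirabolic subgroup $E_n$ and apply the Bernstein--Zelevinsky filtration of Proposition 3.1, obtaining a filtration $0 = \Sigma_0 \subset \Sigma_1 \subset \cdots \subset \Sigma_{n+1} = \pi_1|_{E_n}$ whose graded pieces, further restricted to $\GL_n(F)$, take the form ${\rm ind}_{R_i}^{\GL_n(F)}(\pi_1^{n-i+1} \otimes \psi_{n-i})$. Proposition 3.2 then identifies the relevant Ext groups against $\tilde{\pi}_2$ with the smaller groups
$$\Ext^j_{\GL_i(F)}[\pi_2^{n-i},\tilde{\pi}_1^{n-i+1}],$$
and the resulting long exact sequences (equivalently, the associated spectral sequence) assemble them into $\Ext^\bullet_{\GL_n(F)}[\pi_1,\tilde{\pi}_2]$.

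The $i=0$ piece lives over the trivial group and reduces to $\Hom[\C,\C] = \C$ in degree zero, since the Whittaker models of $\pi_1$ and $\tilde{\pi}_2$ are one-dimensional by genericity; this already accounts for the expected generator of $\Hom_{\GL_n(F)}[\pi_1,\pi_2]$. The conjecture would then follow if one could show that for every $i \geq 1$ the groups $\Ext^j_{\GL_i(F)}[\pi_2^{n-i},\tilde{\pi}_1^{n-i+1}]$ vanish for all $j \geq 0$. I would attempt this by induction on $n$, carrying along a strengthened inductive hypothesis that covers not only irreducible generic $\pi_1,\pi_2$ but also the Bernstein--Zelevinsky derivatives of such representations, using the Leibniz-type formulas for derivatives of parabolically induced representations and the Zelevinsky classification to express each derivative as built from smaller generic data.

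The main obstacle, and the reason the conjecture is left open, is precisely this last step. The derivatives of an irreducible generic representation of $\GL_n(F)$ are in general neither irreducible nor semisimple; their composition factors are indexed by multisegments, and no cheap vanishing of the middle Ext groups between them is at hand. Lemma 1 produces the correct answer at the level of Euler--Poincar\'e characteristics, but the cancellations there come from a central-character twist that is not available for individual Ext groups. A genuinely new ingredient seems to be required --- for example, a Bernstein-block compatibility argument showing that the composition factors of $\pi_2^{n-i}$ and $\tilde{\pi}_1^{n-i+1}$ always lie in distinct blocks for $i \geq 1$, or a degeneration argument controlling the differentials in the filtration spectral sequence. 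I would test the plan on $n=1$ (which is essentially the classical $\GL_2$--to--$\GL_1$ computation) and then on $n=2$ with $\pi_1$ a generic principal series of $\GL_3(F)$, to identify the combinatorial pattern before attempting the general inductive step.
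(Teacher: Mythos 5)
The statement you are addressing is labeled a conjecture in the paper precisely because the author could not prove it in general, so neither you nor the paper has a complete argument; the comparison is therefore between your proposed line of attack and the partial evidence the paper assembles.

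Your reduction via the Bernstein--Zelevinsky filtration and Proposition 3.2 is the same starting point as the paper's proof of Theorem 3 on Euler--Poincar\'e characteristics. The substantive problem is the sufficient condition you isolate, namely that $\Ext^j_{\GL_i(F)}[\pi_2^{n-i},\tilde{\pi}_1^{n-i+1}]$ should vanish for \emph{all} $j\geq 0$ whenever $i\geq 1$. That condition is not merely hard; it is false, already for $n=1$. If $\pi_1$ is an irreducible principal series $\Ind_B^{\GL_2(F)}(\chi_1,\chi_2)$, then $\pi_1^1$ is a representation of $\GL_1(F)$ with constituents $\chi_1\nu^{1/2}$ and $\chi_2\nu^{1/2}$, and choosing $\pi_2$ equal to one of them makes $\Hom_{\GL_1(F)}[\pi_2,\tilde{\pi}_1^1]$ nonzero. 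The fact that $\dim\Hom_{\GL_n(F)}[\pi_1,\pi_2]=1$ in the generic case despite such nonzero contributions is an artifact of the connecting maps in the long exact sequences attached to the filtration, which produce cancellations between $\Hom$ and $\Ext^1$ of the higher filtration pieces. Lemma 1 gives $\EP=0$ for those pieces, but that is a statement about alternating sums, not about termwise vanishing; you implicitly promote the alternating-sum statement to a termwise one, and the very test on $n=1$ that you propose would expose this. A correct plan has to analyze the differentials in the filtration spectral sequence rather than hope for the $E_1$-page to be concentrated at $i=0$.

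You also omit the two cases the paper does settle cleanly, and they are worth having. If $\pi_1$ is supercuspidal it is compactly induced from a subgroup compact modulo centre; since the centre of $\GL_{n+1}(F)$ meets $\GL_n(F)$ trivially, the restriction of $\pi_1$ to $\GL_n(F)$ is projective, so all higher $\Ext$ vanish at once. If instead $\pi_2$ is supercuspidal, Frobenius reciprocity reduces the computation to a compact-mod-centre open subgroup of cohomological dimension one, killing $\Ext^i$ for $i>1$, and then the equality $\EP=\dim\Hom=1$ kills $\Ext^1$. These handle $n=1$ entirely and, combined with the filtration argument you sketch, most of $n=2$; what remains open even in the paper is the case $n=2$ with both $\pi_1$ and $\pi_2$ full induced from Borel subgroups, where the candidate obstruction $\Ext^2_T[\pi^1_{1,N},\chi_2]$ can genuinely be nonzero because $\Ext^2_{\Z\times\Z}[\C,\C]=\C$. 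You should fold the cuspidal arguments into your plan and replace the false termwise-vanishing target with an analysis of the boundary maps.
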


\begin{remark}
  By Remark \ref{nonvanishing}, one cannot remove the genericity condition for either $\pi_1$ or $\pi_2$ in the above conjecture.
  In particular, one cannot expect that a generic representation of $\GL_{n+1}(F)$
  when restricted to $\GL_n(F)$ is a projective
  representation in $\RR(\GL_n(F))$
  although this is the case for supercuspidal
  representations of $\GL_{n+1}(F)$.
  The paper \cite{savin} of Chan and Savin proves that the part of the
  Steinberg representation of $\GL_{n+1}(F)$ (denoted $\St_{n+1}$)
  in the Iwahori
  component of the Bernstein decomposition for $\RR(GL_n(F))$ is a projective module. There is no doubt then that $\St_{n+1}$ when restricted to $\GL_n(F)$
is a projective
representation in $\RR(\GL_n(F))$, therefore
$\Ext^i_{\GL_n(F)}[\St_{n+1},\pi_2] = 0$ for $i>0$ for any irreducible representation $\pi_2$ of $\GL_n(F)$. As a consequence, it will follow from the duality theorem of Schneider-Stuhler, cf. Theorem \ref{SS} below,
that $\St_{n+1}$ contains no irreducible submodule of $\GL_n(F)$.
  \end{remark}

Towards checking the validity of this conjecture in some cases, note that by Theorem \ref{duke93} and Theorem \ref{whittaker}, under the hypothesis of the conjecture,

$$  \dim \Hom_{\GL_n(F)}[\pi_1,\pi_2] = 1, {\rm~~~ and ~~~} \EP[\pi_1,\pi_2] = 1.
$$
It follows that if we already knew that 
$\Ext^i_{\GL_n(F)}[\pi_1,\pi_2] = 0, \,\,\, i>1,$
then we will also know that,
$\Ext^1_{\GL_n(F)}[\pi_1,\pi_2] = 0,$
and the conjecture will be proved for such representations.

It is easy to see that if $\pi_1$ or $\pi_2$ is cuspidal, then 
$\Ext^i_{\GL_n(F)}[\pi_1,\pi_2] = 0$ for $ i>1.$
We do one slightly less
obvious case when $\pi_1$ arises as a subquotient of a principal series
representation induced from a cuspidal representation of a maximal parabolic
in $\GL_{n+1}(F)$.

It follows from \cite{schneider-stuhler:sheaves}*{Corollary III.3.3(i)}
that
if $\pi_1$ arises from a cuspidal representation of a maximal parabolic
in $\GL_{n+1}(F)$, it has a projective resolution
of length 1 in the category $\RR_\chi(G)$
of smooth representations of $G=\GL_{n+1}(F)$ with central character $\chi$. It is easy to see
that a projective module in $\RR_{\chi}(\GL_{n+1}(F))$ when considered as a representation of
$\GL_n(F)$ is a projective module in $\RR(\GL_n(F))$, cf. Proposition 3.2 in \cite{NP}.
This proves vanishing of $\Ext^i_{\GL_n(F)}[\pi_1,\pi_2] = 0$ for $ i>1,$ hence also of
$\Ext^1_{\GL_n(F)}[\pi_1,\pi_2].$

This takes care of $G=\GL_{n+1}(F)$ for $n+1 \leq 3$, except that for $\GL_3(F)$
if both $\pi_1$ and $\pi_2$ arise as components of  principal series representations induced from their
Borel subgroups then there is a possibility of having nontrivial
$\Ext^2_{\GL_2(F)}[\pi_1,\pi_2]$. By the duality theorem of Schneider-Stuhler, cf. Theorem \ref{SS} below,
we will have, $\Hom_{\GL_2(F)}[D\pi_2,\pi_1] \not = 0$ (where $D\pi_2$ is the Aubert-Zelevinsky involution of $\pi_2$). The following proposition takes care of this.

\begin{prop}\label{gl3}
  Let $\pi_1$ be an irreducible generic representation of $\GL_3(F)$, and $\pi_2$ any irreducible representation of $\GL_2(F)$ which is not a twist of the
  Steinberg representation of $\GL_2(F)$. Then
  $$\Hom_{\GL_2(F)}[\pi_2,\pi_1]  = 0.$$
  \end{prop}

We will not prove  this proposition here but discuss two propositions which deal with all but a few cases of the proposition above. The cases left out by the next two
propositions can be handled by the Mackey restriction of an explicit principal series
(especially using that different inducing data can give rise to the same principal series).

\begin{prop} \label{nosub} Let $H_1 \subset H$ be $p$-adic groups with $Z=F^\times$ contained in the center of $H$ with $Z\cap H_1 = \{1\}$. Suppose $\mu$ is a smooth representation of $H_1$, and $\pi_2$ an irreducible admissible representation of $H$.
    Then $\Hom_{H}[\pi_2, \ind_{H_1}^{H}(\mu)]=0$.
  \end{prop}
\begin{proof}Note that for each $x \in H/H_1$,
  restriction of functions from
  $H$ to $(Z=F^\times)\cdot x$ gives rise to $F^\times$-equivariant  maps
  $$ \ind_{H_1}^{H}(\mu)
  \longrightarrow {\mathcal S}(F^\times),$$
    which can be assumed to be nonzero for any  $f \in \ind_{H_1}^{H}(\mu)$ by choosing
  $x \in H/H_1$ appropriately.

  Therefore if $\Hom_{H}[\pi_2, \ind_{H_1}^{H}(\mu)] \not =0$ choosing $x \in H/H_1$ appropriately,
  we get a nonzero map from $\pi_2 \longrightarrow {\mathcal S}(F^\times)$ which is
  $\omega$-equivariant where $\omega$ is the central character of $\pi_2$.
  Since ${\mathcal S}(F^\times)$ has no functions on which $F^\times$ operates by a character,
  the proof of the proposition is complete.
    \end{proof}

This lemma
when combined with the Bernstein-Zelevinsky filtration
in Proposition \ref{filtration} has the following as an
immediate consequence.

\begin{prop}\label{submodule}
  Let $\pi_1$ be any smooth representation of $\GL_{n+1}(F)$ of finite length,
  and $\pi_2$ any irreducible representation of $\GL_n(F)$. Then if
  $$\Hom_{\GL_n(F)}[\pi_2,\pi_1]  \not = 0,$$
  then $\pi_2$ appears a submodule of $J_{n,1}(\pi_1)$
  where $J_{n,1}$ denotes
  the (un-normalized) Jacquet module with respect to the $(n,1)$ parabolic in $\GL_{n+1}(F)$ considered
  as a module for $\GL_n(F) \subset \GL_n(F) \times \GL_1(F)$; in particular, if $J_{n,1}(\pi_1) =0$,
  then there are no nonzero $\GL_n(F)$-submodules in $\pi_1$.
  \end{prop}
\begin{proof} The proof of the proposition is an immediate consequence of the  observation
  that the Bernstein-Zelevinsky filtration 
  in Proposition \ref{filtration} when restricted to $\GL_n(F)$ gives rise to representations
  of $\GL_n(F)$ induced from subgroups $H_i \subset \GL_n(F)$ with $H_i \cap \{Z(\GL_n(F)) = F^\times \} = \{1\}$
  except in the case when $H_i=\GL_n(F)$ which corresponds to the Jacquet module   $J_{n,1}(\pi_1)$. \end{proof}

\section{Finite dimensionality of Ext groups}

In this section we prove the finite dimensionality of Ext-groups in the case of $\SO_n(F) \subset \SO_{n+1}(F)$. The proof will have an inductive
structure, and will involve Bessel models in the inductive step, so we begin by recalling the concept of Bessel models.

Let $V = X + D + W + Y$ be a quadratic space over the non-archimedean local field $F$ 
with $X$ and $Y$ totally isotropic subspaces of $V$ in duality with each other under the underlying bilinear form,  
$D$ an anisotropic line in $V$, 
and $W$ a quadratic subspace of $V$. Suppose
that the dimension of $X$ is $k$; fix  a complete flag 
$\langle e_1 \rangle \subset \langle e_1, e_2 \rangle \subset \cdots \subset \langle e_1,e_2,\cdots,e_k \rangle = X$ of isotropic subspaces in $X$. 
Let $P = MU$ be the parabolic subgroup in $\SO(V)$ stabilizing this flag,
with $M=\GL_1(F)^k \times \SO(D+W)$. 
For $W \subset V$ a codimension $2k+1$ subspace as above, the subgroup $\SO(W) \cdot U$ which is uniquely defined up to 
conjugacy by $\SO(V)$
makes frequent appearance in this work, as well as in other works on classical groups. 
We call this subgroup as the {\it Bessel subgroup}, and denote it as $\Bes(V,W) = \SO(W)\cdot U$.

Let $P_X= M_X \cdot U_X$ be the maximal parabolic of $\SO(V)$ stabilizing $X$. We have $M_X 
\cong \GL(X)\cdot \SO(W+D)$, and $U_X$ sits in the exact sequence,
$$1 \rightarrow \Lambda^2X \rightarrow U_X \rightarrow X \otimes ( D+W) \rightarrow 1.$$

Let $\ell: U \rightarrow F$ be a linear form such that
\begin{enumerate}
  \item its restriction to each of the simple root spaces in $\GL(X)$ 
defined by the flag $\langle e_1 \rangle \subset \langle e_1, e_2 \rangle \subset \cdots \subset \langle e_1,e_2,\cdots,e_k \rangle = X$ 
of isotropic subspaces in $X$
is non-trivial;
\item its restriction to the unipotent radical of the parabolic $P_X=M_XU_X$ in $\SO(V)$ stabilizing $X$ is trivial on the 
  subgroup of $U_X$ which is $\Lambda^2 X$;
 \item and on the quotient of $U_X$ by $\Lambda^2 X$ which can be identified to $(D+W) \otimes X$, $\ell$ is given by 
the tensor product of a 
linear form on $D+W$ which is trivial on $W$, and a linear form on $X$ which is trivial on the subspace $\langle e_1,e_2,\cdots,e_{k-1} \rangle$.
\end{enumerate}

Composing the linear form $\ell:U \rightarrow F$ with a nontrivial character $\psi_0:F \rightarrow \C^\times$, we get a character $\psi:U \rightarrow \C^\times$.
This character $\psi:U \rightarrow \C^\times$ 
depends only on $W \subset V$ a nondegenerate subspace of $V$ of odd codimension,
such that the quadratic space $V/W$ is split, and is independent of all choices made along the way (including that of the character $\psi_0$).
The character $\psi$ of $U$ is invariant under $\SO(W)$. For any representation $\sigma$ of $\SO(W)$, $\Bes(V,W) = \SO(W)\cdot U$
comes equipped with the representation which is
$\sigma$ on $\SO(W)$, and $\psi$ on $U$; since $\psi$ is fixed when considering representations of $\Bes(V,W) = \SO(W)\cdot U$, we denote this representation of
$\Bes(V,W) = \SO(W)\cdot U$ as $\sigma$ itself or sometimes as $\sigma \otimes \psi$.

The Bessel models of a smooth  representation $\pi$ of $\SO(V)$ are irreducible admissible representations  $\sigma$ of $\SO(W)$ such that,
$$\Hom_{\Bes(V,W)}[\pi,\sigma ]  \cong  
\Hom_{\SO(V)} \left [\pi, {\rm Ind}_{\Bes(V,W)}^{\SO(V)} (\sigma ) \right ]
\cong 
\Hom_{\SO(V)} \left [ {\rm ind}_{\Bes(V,W)}^{\SO(V)} (\sigma^\vee ), \pi^\vee \right ] \not =  0.
$$
When $W$ is a codimension one subspace of $V$, then
$\Bes(V,W)=W$, and 
the notion of a Bessel model is simply that of restriction from $\SO(V)$ to $\SO(W)$, whereas 
when $\dim(W) = 0,1$, then the notion of a Bessel model is nothing but that of the Whittaker model (for a particular character of the maximal unipotent subgroup of $\SO(V)$ if
$\dim(W) =1$). 

We can define the higher Ext versions of the Bessel models as,
$$\Ext^i_{\Bes(V,W)}[\pi,\sigma]  
\cong  \Ext^i_{\SO(V)} \left [\pi, {\rm Ind}_{\Bes(V,W)}^{\SO(V)} (\sigma ) \right ]  
 \cong  
\Ext^i_{\SO(V)} \left [ {\rm ind}_{\Bes(V,W)}^{\SO(V)} (\sigma^\vee), \pi^\vee \right ].$$

The following proposition whose proof we will omit is analogous to that of
Theorem 15.1 of \cite {ggp}. It 
allows one to prove finite dimensionality of 
$\Ext^i_{\Bes(V,W_0)}[\pi,\sigma ]$ if we know the finite dimensionality of 
$\Ext^i_{\SO(W)}[\pi,\sigma'] $ where $W$ is a codimension one subspace in the quadratic space $V$, and $\sigma'$ an irreducible representation of $\SO(W)$.

\begin{prop}
Let $W \subset V$ be a nondegenerate quadratic subspace of codimension 1
over a non-archimedean local field $F$. Suppose that
\[  W=Y_k\oplus  W_0 \oplus Y_k^\vee\]
and 
\[ V= Y_k \oplus V_0 \oplus Y_k^\vee, \]
with $Y_k$ and $Y_k^\vee$ isotropic subspaces and $W_0 \subset V_0$ nondegenerate quadratic spaces with $W_0$ a subspace of codimension one in
$V_0$. Let
$P_W(Y_k)$ be the parabolic in $\SO(W)$ stabilizing $Y_k$ with Levi subgroup
\[  M = \GL(Y_k)  \times \SO(W_0) \]
For an irreducible supercuspidal
representation $\tau$ of $\GL(Y_k)$ and an irreducible admissible
representation $\pi_0$ of $\SO(W_0)$, let 
\[  \tau \rtimes \pi_0 = \Ind_{P_W(Y_k)}^{\SO(W)} (\tau \boxtimes \pi_0) \] 
be the corresponding (un-normalized) principal series representation of $\SO(W)$. Let
$\pi$ be an irreducible admissible representation $\SO(V)$ which does
not belong to the Bernstein component associated to $(\GL(Y_k) \times \SO(V_0), \tau \boxtimes \mu)$
for any irreducible representation $\mu$ of $\SO(V_0)$. Then
\[ \Ext^i_{\SO(W)}[\pi, \tau \rtimes \pi_0] \cong 
\Ext^i_{\Bes(V,W_0)}[\pi, \pi_0  ]. \]
\end{prop}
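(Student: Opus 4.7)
The plan is to reduce both sides to Ext groups on smaller subgroups via Frobenius reciprocity, and then match them using a Mackey/geometric-lemma analysis in which the Bernstein-component hypothesis kills all but one orbit.

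For the left-hand side, Proposition~2.10 applied inside $\SO(W)$ to $P_W(Y_m)=M\cdot N_W$ (with Levi $M=\GL(Y_m)\times \SO(W_0)$ and unipotent radical $N_W$) gives
$$\Ext^i_{\SO(W)}[\pi,I(\tau,\pi_0)]\;\cong\;\Ext^i_{M}[\pi_{N_W},\tau\boxtimes \pi_0],$$
where $\pi_{N_W}$ is the normalized Jacquet module of $\pi$ (restricted to $\SO(W)$) along $N_W$. For the right-hand side, $\SO(W_0)$ normalizes $U$ and fixes $\psi$, so the twisted-Jacquet functor $V\mapsto V_{U,\psi}$ from smooth $\Bes(W_0;m)$-modules to smooth $\SO(W_0)$-modules is an exact left adjoint to the exact functor $\sigma\mapsto \sigma\otimes\psi$; exactness of both adjoints yields
$$\Ext^i_{\Bes(W_0;m)}[\pi,\pi_0\otimes \psi]\;\cong\;\Ext^i_{\SO(W_0)}[\pi_{U,\psi},\pi_0].$$

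It remains to match the two reduced Ext groups. Two preliminary facts make this possible: first, $\psi$ restricted to $N_W\subset U$ is trivial, since the only nontrivial component of $\psi$ on $N_V/[N_V,N_V]=V_0\otimes Y_m$ lies along the ``$D\otimes e_m^*$'' direction, which is transverse to the subspace $W_0\otimes Y_m$ parametrizing $N_W/[N_W,N_W]$; second, $\tau$ is supercuspidal of $\GL(Y_m)$, hence projective-injective in its Bernstein block. One then stratifies $\pi_{N_W}$ as an $M$-module by the $\SO(W)$-orbits on the isotropic $m$-plane Grassmannian of $V$, which (since $W$ is codimension one in $V$) has just two types of orbits: the closed orbit of $m$-planes contained in $W$, and the open orbit of $m$-planes transversal to $W$. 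The closed stratum is constructed, via parabolic induction on the $\GL(Y_m)$-factor of $M$, from the $\SO(V)$-Jacquet module $\pi_{N_V}$; the Bernstein-component hypothesis says $\pi_{N_V}$ admits no subquotient with $\GL(Y_m)$-factor $\tau$, and cuspidality of $\tau$ therefore kills this stratum's contribution to the Ext with $\tau\boxtimes \pi_0$. The open stratum is identified, via an iterated twisted-Jacquet computation along the chain $N_W\subset N_V\subset U$, with an $M$-module whose Ext with $\tau\boxtimes \pi_0$ recovers $\Ext^i_{\SO(W_0)}[\pi_{U,\psi},\pi_0]$, completing the matching.

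The main obstacle is the open-orbit identification: one must trace carefully through the iterated Jacquet functors to see that the extra directions in $U/N_V$, together with the cuspidality of $\tau$, conspire to produce the $\GL(Y_m)$-Whittaker coefficient of $\tau$ and leave $\pi_{U,\psi}$ as the residual $\SO(W_0)$-module on which the Ext is computed. The closed-orbit vanishing reduces cleanly, once the geometric-lemma filtration is set up, to the Bernstein-component hypothesis combined with cuspidality of $\tau$.
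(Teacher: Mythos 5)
The paper gives no proof of this proposition---it is explicitly stated that ``the proof we will omit''---so there is nothing to compare against; I can only assess the internal soundness of your plan.

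Your two Frobenius reductions are correct: Proposition~2.10 of the paper does give
$\Ext^i_{\SO(W)}[\pi, I(\tau,\pi_0)]\cong \Ext^i_{M}[\pi_{N_W},\tau\boxtimes\pi_0]$,
and the adjunction between the exact functors $(-)_{U,\psi}$ and $(-)\otimes\psi$ does give
$\Ext^i_{\Bes(W_0;m)}[\pi,\pi_0\otimes\psi]\cong\Ext^i_{\SO(W_0)}[\pi_{U,\psi},\pi_0]$.
Your two preliminary observations ($\psi|_{N_W}$ trivial; $\tau$ projective-injective in its block) are correct and are indeed what makes the argument work.

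The genuine gap is in the stratification step, and it is not merely the acknowledged ``open-orbit obstacle.'' You propose to stratify $\pi_{N_W}$ ``by the $\SO(W)$-orbits on the isotropic $m$-plane Grassmannian of $V$'' and to ``construct the closed stratum via parabolic induction on the $\GL(Y_m)$-factor.'' But $\pi$ is an abstract irreducible admissible representation, not realized as an induced module, so there is no Mackey/geometric-lemma filtration of $\pi_{N_W}$ indexed by $\SO(W)\backslash\SO(V)/P_V$. The filtration you need comes from a different place: $N_W$ is normal in $N_V$ (the unipotent radical of the parabolic of $\SO(V)$ stabilizing $Y_m$), with abelian quotient $Z:=N_V/N_W\cong D\otimes Y_m\cong F^m$. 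Thus $\pi_{N_W}$ is automatically a smooth $Z$-module, hence an $\ell$-sheaf on $\widehat{Z}$ in the Bernstein--Zelevinsky sense, and $M=\GL(Y_m)\times\SO(W_0)$ acts compatibly ($\SO(W_0)$ acting trivially on $Z$). The two $\GL(Y_m)$-orbits on $\widehat{Z}\cong F^m$ --- $\{0\}$ and its complement --- produce the two-step filtration
$0\to K\to\pi_{N_W}\to\pi_{N_V}|_{M}\to 0$,
with $K$ an $\ind$ from the mirabolic $\times\,\SO(W_0)$ of the twisted Jacquet module $\pi_{N_V,\psi_Z}$. Note in particular that the ``closed'' term is the \emph{restriction} of $\pi_{N_V}$ from $M_V=\GL(Y_m)\times\SO(V_0)$ to $M$, not a parabolic induction; and its $\Ext$ against $\tau\boxtimes\pi_0$ vanishes only after you invoke the K\"{u}nneth lemma (Lemma~5.3 of the paper) to peel off the $\GL(Y_m)$-factor, at which point cuspidality of $\tau$ and the Bernstein-block hypothesis on $\pi_{N_V}$ give zero. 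For the open piece, the chain is correct but the unwinding is where the work lives: Frobenius from the mirabolic, Bernstein--Zelevinsky's $\widetilde\tau|_{P_m}\cong\ind^{P_m}_{N_{\GL}}\psi'$, and a second Frobenius down to $N_{\GL}\times\SO(W_0)$, whose net effect composes the twisted Jacquet functors along $N_W\subset N_V\subset U$ and lands you on $\Ext^i_{\SO(W_0)}[\pi_{U,\psi},\pi_0]$. You flag this yourself as undone, and it is precisely the technical content of the proposition, so the plan as written does not yet constitute a proof.
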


\begin{cor} With the notation as above, if 
$ \Ext^i_{\SO(W)}[\pi, \tau \rtimes  \pi_0]$ are finite dimensional, 
then so are $\Ext^i_{\Bes(V,W_0)}[\pi, \pi_0  ]$.
\end{cor}
\begin{proof}It suffices to observe that given $\pi_0$, there is a representation $\pi$ of $\SO(V)$ which does not belong to the Bernstein component associated to $(\GL(Y_k) \times \SO(V_0), \tau \boxtimes 
\mu)$ for any irreducible representation $\mu$ of $\SO(V_0)$. 
\end{proof}

We now come to the proof of finite dimensionality of the $\Ext$ groups.

\begin{thm} \label{finite}
  Let $V = X + D + W + Y$ be as at the beginning of the section,
   a quadratic space over the non-archimedean local field $F$ with $W$ a quadratic subspace of codimension $2k+1$. 
Then for any  irreducible admissible representation $\pi$ of $\SO(V)$ 
and irreducible admissible representation $\sigma$ of $\SO(W)$,  $\Ext^i_{\Bes(V,W)}[\pi,\sigma ]$ are finite dimensional vector
spaces over $\C$ for all $i \geq 0$.
\end{thm}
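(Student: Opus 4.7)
The plan is to prove the theorem by induction on $\dim V$, using the Proposition immediately preceding the theorem as the main reduction tool, in conjunction with a Bernstein-Zelevinsky-style filtration analogous to Proposition 3.1 but adapted to the pair $\SO(W) \subset \SO(V)$.

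\textbf{Base case and reduction.} When $\dim W \leq 1$, the group $\SO(W)$ is trivial (or nearly so), the Bessel subgroup collapses to a unipotent radical $U$ with its character $\psi$, and the Bessel model reduces to a Whittaker model. The Frobenius-reciprocity propositions of Section 2, in their twisted-Jacquet form, then identify $\Ext^i_{\Bes(W;k)}[\pi,\psi]$ with an Ext group computed on the twisted Jacquet module $\pi_{U,\psi}$; for finite-length admissible $\pi$ this module is finite-dimensional over a trivial (or compact) group, so finite-dimensionality is immediate. For $k \geq 1$ in the inductive step, I would invoke the preceding Proposition to obtain
\[ \Ext^i_{\Bes(W;k)}[\pi, \sigma \otimes \psi] \;\cong\; \Ext^i_{\SO(W')}[\pi, I(\tau, \sigma)], \]
where $W' \supset W$ has codimension one in $V$ and $I(\tau,\sigma)$ is a finite-length admissible representation of $\SO(W')$. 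The Bernstein-component hypothesis on $\pi$ can always be met, because $\pi$ lies in only finitely many Bernstein components of $\SO(V)$, and the auxiliary supercuspidal $\tau$ of $\GL(Y_m)$ may be chosen so as to avoid all of them. This reduces the general theorem to the codimension-one restriction case for $\SO(W') \subset \SO(V)$.

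\textbf{Codimension-one case.} For $W \subset V$ of codimension one and $\rho$ a finite-length admissible representation of $\SO(W)$, I would establish an $\SO$-analog of Proposition 3.1 by analyzing the $\SO(W)$-orbits on suitable partial flag varieties of $\SO(V)$. This should produce a finite filtration of $\pi|_{\SO(W)}$ whose successive quotients take the form ${\rm ind}_{Q}^{\SO(W)}(\pi_{N,\psi'} \otimes \psi'')$, where $\pi_{N,\psi'}$ is a twisted Jacquet module of $\pi$ with respect to a parabolic of $\SO(V)$ and thereby carries Bessel-type data for a classical group of strictly smaller dimension. Combining this filtration with the Frobenius-reciprocity propositions of Section 2 converts each contribution to $\Ext^i_{\SO(W)}[\pi, \rho]$ into a Bessel Ext group for a classical group of strictly smaller dimension, finite-dimensional by the outer inductive hypothesis. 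Additivity of $\Ext$ in short exact sequences then closes the induction.

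\textbf{Main obstacle.} The real work lies in constructing the $\SO$-analog of the Bernstein-Zelevinsky filtration: unlike the transparent mirabolic geometry of $\GL_n$, the $\SO(W)$-orbits on flag varieties of $\SO(V)$ comprise several genuinely different strata (open, middle, and closed), and for each stratum one must verify that the resulting subquotient is compactly induced from a subgroup of Bessel type, with finite-length admissible inducing data coming from twisted Jacquet modules of $\pi$. Once this geometric input is secured, the propagation of finite-dimensionality through the induction is formal bookkeeping with the Frobenius-reciprocity and Jacquet-module machinery already collected in Section 2.
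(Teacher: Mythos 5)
Your overall skeleton matches the paper's: induction on $\dim V$, with the preceding proposition on $I(\tau,\pi_0)$ used (with a supercuspidal $\tau$ chosen outside the relevant Bernstein components) to pass from the Bessel setting to the codimension-one restriction setting. But the way you propose to close the codimension-one case has a genuine gap, and it is exactly the point where the paper takes a different and essential turn.

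You propose to build an $\SO$-analog of the Bernstein--Zelevinsky filtration \emph{of $\pi|_{\SO(W)}$ itself}, for $\pi$ an arbitrary irreducible admissible representation of $\SO(V)$, with successive quotients of the form ${\rm ind}_Q^{\SO(W)}(\pi_{N,\psi'}\otimes\psi'')$. No such filtration is available. The mirabolic geometry that produces the BZ filtration for $\GL_{n+1}$ (restriction to $E_n$) has no counterpart for $\SO$; the ``orbit'' approach only yields a filtration of the restriction of a representation that is already realized on functions on a homogeneous space, i.e.\ an \emph{induced} representation. The paper therefore does not try to filter $\pi|_{\SO(W)}$ at all. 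Instead it (i) first proves the finite-dimensionality statement for $\pi$ a principal series $\sigma = \pi\times\sigma_0$ induced from a maximal parabolic of $\SO(V)$, by restricting the function space $E_\sigma$ to $\SO(V')$ and decomposing it via the two $\SO(V')$-orbits on $P(F)\backslash\SO(V)$ into an open piece $E_{\sigma,\mathcal U}$ and a closed piece $E_{\sigma,\mathcal X}$; on the open piece the BZ filtration that appears is that of the $\GL(X)$-\emph{factor of the inducing data}, not of the $\SO(V)$-representation; both pieces are then handled with Bernstein's second adjointness plus the K\"unneth lemma and the induction hypothesis; and (ii) then deduces the statement for arbitrary irreducible $\pi$ from the principal-series case by Lemma~5.2: write $\pi$ as a quotient of a principal series $Ps$, so that the long exact sequence in $\Ext$ for $0\to\pi'\to Ps\to\pi\to 0$, together with finite-dimensionality of all $\Hom$ spaces and of $\Ext^j_{\SO(W)}[Ps,\sigma]$, bootstraps finite-dimensionality of $\Ext^j_{\SO(W)}[\pi,\sigma]$ degree by degree. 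This bootstrapping step via long exact sequences is the key mechanism that lets one bypass the nonexistent BZ filtration of $\pi|_{\SO(W)}$, and it is the central idea missing from your proposal. In short: you correctly identify the obstacle, but the route you anticipate to resolve it (a direct geometric filtration of $\pi|_{\SO(W)}$) would fail, whereas the actual resolution is to trade $\pi$ for a principal series and use d\'evissage in $\Ext$.
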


\begin{proof} The proof of this theorem will be by induction on the dimension of $V$. We thus assume that 
  for any quadratic spaces ${\mathcal W} \subset {\mathcal V}$ with $\dim({\mathcal V}) < \dim (V)$ (with ${\mathcal V}/{\mathcal W}$ a split quadratic space of
  odd dimension), and
for any  irreducible admissible representation $\pi$ of $\SO({\mathcal V})$ 
and irreducible admissible representation $\sigma$ of $\SO({\mathcal W})$,
$$\Ext^i_{\Bes({\mathcal V}, {\mathcal W})}[\pi,\sigma ]$$ are finite dimensional vector
spaces over $\C$ for all $i \geq 0$.

We begin by  proving the theorem for a principal series representation of $\SO(V)$ induced from an irreducible 
representation of a maximal  parabolic subgroup. By the previous proposition, we need only prove the finite dimensionality of
$\Ext^i_{\SO(V')}[\pi,\pi']$ 
where $V'$ is a codimension one subspace of $V$, and $\pi'$ is an irreducible, admissible
representation of $\SO(V')$.

Much of the proof below  closely follows the paper of Moeglin-Waldspurger \cite{wald:gp3}, 
where they have to do much harder work to precisely analyze $\Hom_{\SO(V')}[\pi,\pi']$.

Assume  that the  dimension of $V$ is $n+1$, and that $V'$ is a subspace of dimension $n$. Let $V = X + V_0 + Y$ 
with $X$ and $Y$ totally isotropic subspaces of $V$ of dimension $m$, and in perfect pairing with each other.
Let $P$ be the maximal parabolic subgroup of $\SO(V)$ stabilizing $X$. Let $M = \GL(X) \times \SO(V_0)$ be a Levi subgroup of $P$, 
$ \pi_0 \otimes \sigma_0$ an irreducible representation of $M$ 
realized on the space $E_{\pi_0} \otimes E_{\sigma_0}$, and $\pi = \pi_0 \rtimes \sigma_0$ the corresponding principal series representation of $\SO(V)$. Denote by  $E_\pi$ the space of  function on $\SO(V)$ with values in
$E_{\pi_0} \otimes E_{\sigma_0}$
verifying the usual conditions under left translation by $P(F)$ 
for defining the principal series representation $\pi = \pi_0 \rtimes \sigma_0$  of $\SO(V)$.

To understand the restriction of the principal series $\pi = \pi_0 \rtimes \sigma_0$ to $\SO(V')$, we need to analyze the orbits of $\SO(V')$ on $P(F)\backslash \SO(V)$.
To every $g \in P(F)\backslash \SO(V)$, one can associate an isotropic subspace $g^{-1}(X)$ of $V$. Let ${\mathcal U}$ be the set of $g \in P(F)\backslash \SO(V)$ such that
$\dim(g^{-1}(X) \cap V') = m-1$, and let ${\mathcal X}$ be the set of $g \in P(F)\backslash \SO(V)$ such that
$\dim(g^{-1}(X) \cap V') = m$.  Then ${\mathcal U}$ is an open subset of $P(F)\backslash \SO(V)$ which is a single orbit under 
$\SO(V')$, and  ${\mathcal X}$ is a closed subset of $P(F)\backslash \SO(V)$ which is a single orbit under $\SO(V')$ 
unless $n$ is even, and $n =2m$ in which case there are two orbits in ${\mathcal X}$ under $\SO(V')$.

Denote
by  $E_{\pi, {\mathcal U}}$ the subspace of  functions in $E_\pi$ with support in ${\mathcal U}$, and denote by $E_{\pi, {\mathcal X}}$ 
the space $E_\pi/E_{\pi, {\mathcal U}}$. The spaces $E_{\pi, {\mathcal U}}$ and $E_{\pi, {\mathcal X}}$ are invariant under $\SO(V')$, and we have an exact sequence of $\SO(V')$-modules,
$$0 \rightarrow E_{\pi, {\mathcal U}} \rightarrow E_\pi \rightarrow E_{\pi, {\mathcal X}} \rightarrow 0.$$ 

To prove the finite dimensionality of Ext groups $\Ext^i_{\SO(V')}[E_\pi, \pi']$,
it suffices to prove similar finite dimensionality theorems for the Ext groups involving  the $\SO(V')$-modules $ E_{\pi, {\mathcal U}}$ and  $E_{\pi, {\mathcal X}}$. We analyze the two terms separately.

For analyzing $E_{\pi, {\mathcal X}}$, 
we assume (after conjugation by $\SO(V)$) that both $X$ and $Y$ are contained in $V'$. Thus, $V' = X + V'_0 + Y$ with $V_0' = V'\cap V_0$.
It can be seen that, 
$$E_{\pi, {\mathcal X}} = \pi_0|.|^{1/2} \times \sigma_0|_{\SO(V'_0)}.$$

By Theorem \ref{second-ad} (the second adjointness theorem of Bernstein),
$$\Ext^i_{\SO(V')}[E_{\pi, {\mathcal X}}, \pi'] = \Ext^i_M[\pi_0|.|^{1/2} \boxtimes \sigma_0|_{\SO(V'_0)}, \pi'_{N^-}],$$
where $M=\GL(X) \times \SO(V_0')$.

 The proof of the finite dimensionality of $\Ext^i_{\SO(V')}[E_{\pi, {\mathcal X}}, \pi' ]$ 
 now follows from the induction hypothesis according to which the theorem was supposed to be known
for $\SO(V'_0) \subset \SO(V_0)$, besides the fact that $\pi'_{N^-}$, the Jacquet module with respect to the opposite 
parabolic $P^- = M N^-$ is an admissible representation of $M = \GL(X) \times \SO(V_0')$, 
hence has a finite filtration by 
tensor product of irreducible representations of $\GL(X)$ and $\SO(V_0')$, and then an application of the Kunneth theorem.

We now move on to $E_{\sigma, {\mathcal U}}$. In this case, after conjugation by $\SO(V)$, we will be in the situation,
$$V' = X' + D' + V_0 + Y',$$
where $X'$ and $Y'$ are totally isotropic subspaces of $V'$ of dimension $(m-1)$, and
$D', V_0, X'+Y'$ are non-degenerate quadratic 
spaces. Let $X'=\{ e_1,\cdots,e_{m-1}\}$, and $Y'=\{f_1,\cdots, f_{m-1} \}$. Let
$V'_k=X'_{k-1} + D' + V_0 + Y'_{k-1},$
where $X'_{k-1} = \{e_{m-k+1},\cdots,e_{m-1}\}$, and $Y'_{k-1} = \{f_{m-k+1},\cdots, f_{m-1}\}$, each of dimension $(k-1)$ for $k=1,\cdots, m$, and let $G'_k = \SO(V'_k).$

Using the filtration of the representation $\pi_0$ of $\GL(X)$ restricted to its mirabolic subgroup
given by Proposition \ref{filtration}  
in terms of the derivatives $\pi_0^k=\Delta^k(\pi_0)$,  Moeglin-Waldspurger \cite{wald:gp3} obtain a filtration, 
$$0 = \mu_{m+1} \subset \mu_m \subset \mu_{m-1} \cdots \subset \mu_1 =E_{\sigma, {\mathcal U}},$$  
with,
$$\mu_k/\mu_{k+1} \cong \Delta^k(\pi_0) \rtimes \mu'_k,$$
as modules for $\SO(V')$; the representation $\Delta^k(\pi_0) \rtimes \mu'_k$ is a principal series representation of $\SO(V')$ induced from a parabolic $P_k=M_kN_k$ 
with Levi subgroup $M_k= GL_{m-k}(F) \times G'_k$, and  where 
$$\mu_k' = {\rm ind}^{G'_k}_{\Bes(V'_k,V_0)}( \sigma_0 ),$$

By Theorem \ref{second-ad},
$$\Ext^i_{\SO(V')}[\mu_k/\mu_{k+1}, \pi'] = \Ext^i_{\SO(V')}[\Delta^k(\pi_0) \rtimes \mu'_k, \pi']
\cong \Ext^i_{M_k}[\Delta^k(\pi_0) \boxtimes \mu'_k , \pi'_{{N_k}^-}],$$ 
with $M_k= GL_{m-k}(F) \times G'_k$, a Levi subgroup in $\SO(V')$. Once again Kunneth theorem 
implies the finite dimensionality
of the Ext groups,
$$\Ext^i_{M_k}[\Delta^k(\pi_0) \boxtimes \mu'_k , \pi'_{{N^-_k}}].$$ 

Having proved the theorem for principal series representations of $\SO(V)$  induced  from maximal parabolics, 
the next lemma  proves the theorem in general.\end{proof}

\begin{lemma}\label{6.1}
Let $V $ be a quadratic space over the non-archimedean local field $F$ with $W$ a quadratic subspace of codimension 1. 
If for any principal series  representation $Ps$ of $\SO(V)$ induced from a maximal parabolic
and any irreducible admissible representation $\sigma$ of $\SO(W)$,  $\Ext^i_{\SO(W)}[Ps,\sigma]$ are 
finite dimensional vector
spaces over $\C$ for all $i \geq 0$, then for any irreducible   representation $\pi$ of $\SO(V)$ 
and any irreducible admissible representation $\sigma$ of $\SO(W)$,  $\Ext^i_{\SO(W)}[\pi,\sigma ]$ are 
finite dimensional vector
spaces over $\C$ for all $i \geq 0$.

\end{lemma}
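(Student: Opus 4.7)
The plan is to reduce an arbitrary irreducible $\pi$ of $\SO(V)$ to the principal-series case provided by the hypothesis, combining a two-out-of-three closure property for ``finite-dimensional $\Ext$'' with the Langlands classification. First I would introduce the class $\mathcal{C}$ of smooth $\SO(V)$-modules $U$ for which $\Ext^i_{\SO(W)}[U,\sigma]$ is finite dimensional for every $i \ge 0$. The long exact sequence of $\Ext$, together with the Schneider--Stuhler vanishing beyond the split rank of $\SO(W)$ (Proposition~\ref{prop:ext-vanish}), gives $\mathcal{C}$ the two-out-of-three property in short exact sequences and hence closure under finite filtrations. By hypothesis every $\Ind_Q^{\SO(V)}\tau$, with $Q$ a maximal parabolic and $\tau$ an irreducible admissible representation of $M_Q$, lies in $\mathcal{C}$; Jordan--H\"older filtering the inducing representation extends this to any admissible finite-length $\tau$, and induction in stages extends it further to every $\Ind_P^{\SO(V)}\tau$ with $P$ a proper parabolic and $\tau$ any admissible finite-length representation of $M_P$.

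If $\pi$ is supercuspidal, then because the center of $\SO(V)$ is compact, $\pi$ is compactly induced from an open compact subgroup and is therefore projective in $\RR(\SO(V))$. The Casselman proposition quoted in the preliminaries makes $\pi|_{\SO(W)}$ projective in $\RR(\SO(W))$, so $\Ext^i_{\SO(W)}[\pi,\sigma] = 0$ for $i \ge 1$; the space $\Hom_{\SO(W)}[\pi,\sigma]$ is at most one-dimensional by the multiplicity one theorem of Aizenbud--Gourevitch--Rallis--Schiffmann \cite{multiplicity} and Sun--Zhu \cite{sun-zhu}. Hence $\pi \in \mathcal{C}$.

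If $\pi$ is not supercuspidal, I would invoke the Langlands classification: $\pi$ is the unique irreducible quotient of a standard module $I(\pi):=\Ind_P^{\SO(V)}(\tau\otimes\nu)$ with $P$ proper, $\tau$ tempered irreducible on $M_P$, and $\nu$ a strictly positive unramified character. The first paragraph places $I(\pi)\in\mathcal{C}$. The kernel $K(\pi):=\ker(I(\pi)\twoheadrightarrow\pi)$ has Jordan--H\"older constituents of the form $J(P',\tau',\nu')$ with $\nu'$ strictly smaller than $\nu$ in the standard well-founded partial order on Langlands data. Strong induction on this order, with tempered representations as the base case (non-discrete-series tempered representations being direct summands of parabolic inductions from discrete series on proper Levis, hence in $\mathcal{C}$), gives $K(\pi)\in\mathcal{C}$, and then $\pi\in\mathcal{C}$ follows from the short exact sequence $0\to K(\pi)\to I(\pi)\to\pi\to 0$.

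The hardest part of the argument will be the discrete-series base case: discrete series of $\SO(V)$ are neither supercuspidal nor direct summands of proper parabolic inductions, so neither the projectivity argument nor the tempered direct-summand argument applies directly, and the Langlands induction on $\nu$ is not by itself well-founded here (in small examples, such as Steinberg versus trivial inside a reducible principal series of $\GL_2$, pulling $\pi$ out of the sequence $0 \to \pi \to \Pi \to \Pi/\pi \to 0$ via long exact sequence loops back to another irreducible whose Langlands data leads to $\pi$ again). My plan is to replace the simple induction on $\nu$ by a finer well-founded induction combining $\nu$ with the Moeglin--Tadi\'c Jordan-block classification of discrete series for classical groups, so that within each Bernstein component a discrete series and the non-tempered Langlands quotients sharing its cuspidal support are resolved together. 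This is the one place in the argument where input specific to $\SO$-groups is needed to close the induction.
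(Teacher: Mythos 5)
Your proposal takes a genuinely different route from the paper, but it has a gap that you yourself flag and do not close, and a cleaner argument is available that sidesteps the obstruction entirely.

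You set up the class $\mathcal{C}$, observe closure under extensions, and attempt a well-founded induction over irreducible representations of $\SO(V)$ organized by Langlands data, falling back to the Moeglin--Tadi\'c classification for the discrete-series base case. The difficulty you identify is real: a non-supercuspidal discrete series is tempered, so its Langlands parameter $\nu$ is $0$ and there is nothing strictly smaller to induct on; and writing such a $\pi$ as a quotient of parabolic induction from supercuspidal data produces a kernel whose constituents may loop back to $\pi$ or to other discrete series with the same cuspidal support. Your proposal to ``replace the simple induction on $\nu$ by a finer well-founded induction'' is not an argument but a placeholder, and it is far from clear that Jordan-block data gives a well-founded order compatible with the subquotient structure of standard modules. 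As written, the proposal does not prove the lemma.

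The paper avoids the whole issue by inducting not on the representation $\pi$ but on the cohomological degree $i$, using the a priori finite-dimensionality of $\Hom_{\SO(W)}[\pi',\sigma]$ for all finite-length $\pi'$ (a consequence of the multiplicity one theorem of Aizenbud--Gourevitch--Rallis--Schiffmann and Sun--Zhu, by additivity) as the base of the induction. Writing an arbitrary non-supercuspidal irreducible $\pi$ as a quotient $0 \to \pi' \to Ps \to \pi \to 0$ of a maximal-parabolic principal series, the long exact sequence sandwiches $\Ext^1_{\SO(W)}[\pi,\sigma]$ between $\Hom_{\SO(W)}[\pi',\sigma]$ and $\Ext^1_{\SO(W)}[Ps,\sigma]$, both finite-dimensional; this gives finite-dimensionality of $\Ext^1$ for \emph{all} irreducible, hence all finite-length, $\pi$. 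One then repeats: $\Ext^2_{\SO(W)}[\pi,\sigma]$ is sandwiched between $\Ext^1_{\SO(W)}[\pi',\sigma]$ (just established) and $\Ext^2_{\SO(W)}[Ps,\sigma]$ (hypothesis), and so on. Crucially, no well-foundedness among the representations is ever invoked --- the constituents of $\pi'$ are allowed to be arbitrary, including $\pi$ itself or other discrete series --- because at each stage one only needs the \emph{previous} Ext degree for finite-length objects, not the same degree for ``smaller'' objects. This is exactly what your induction on Langlands parameters fails to supply in the discrete-series case. I would recommend abandoning the representation-theoretic induction in favour of the degree-wise bootstrap.
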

\begin{proof} 
By Proposition \ref{closed}, if $\pi$ is a supercuspidal representation of $\SO(V)$, its restriction to $\SO(W) $  is a projective object in $\RR(SO(W))$. Therefore $\Ext^i_{SO(W)}[\pi,\sigma]$ are zero for $i>0$, and $\Ext^0_{SO(W)}[\pi,\sigma] = \Hom_{SO(W)}[\pi,\sigma]$ is finite dimensional. 

Assume now that $\pi$ is not a supercuspidal representation, and
write $\pi$ as a quotient of a principal series representation induced from a representation of a maximal parabolic subgroup of $\SO(V)$. We thus have an exact sequence,
$$0 \rightarrow \lambda \rightarrow Ps \rightarrow \pi  \rightarrow 0.$$
This gives rise to a long exact sequence,
\begin{eqnarray*}
0 & \rightarrow&  \Hom_{\SO(W)}[\pi, \sigma] 
\rightarrow  \Hom_{\SO(W)}[Ps, \sigma]  \rightarrow \Hom_{\SO(W)}[\lambda, \sigma]  \rightarrow \\  
& \rightarrow &  \Ext^1_{\SO(W)}[\pi, \sigma]   \rightarrow \Ext^1_{\SO(W)}[Ps, \sigma]  
\rightarrow  \Ext^1_{\SO(W)}[\lambda, \sigma]  \rightarrow \Ext^2_{\SO(W)}[\pi, \sigma]  \rightarrow \cdots  
\end{eqnarray*}

Since we know that all Hom spaces in the above exact sequence are finite dimensional, and also
$\Ext^1_{\SO(W)}[Ps, \sigma]  $ is given to be finite dimensional, 
we get the 
finite dimensionality of $\Ext^1_{\SO(W)}[\pi, \sigma]  $ for any irreducible representation $\pi$ of $\SO(V)$. 
This implies finite dimensionality of 
$\Ext^1_{\SO(W)}[\pi, \sigma]  $ for any finite length  representation $\pi$ of $\SO(V)$. 
Armed with this finite dimensionality of $\Ext^1_{\SO(W)}[\pi, \sigma]  $ for any finite length  
representation $\pi$ of $\SO(V)$, and with the knowledge that $\Ext^2_{\SO(W)}[Ps, \sigma]  $ is given to be 
finite dimensional, we get the finite dimensionality of $\Ext^2_{\SO(W)}[\pi, \sigma]  $ for any irreducible 
representation $\pi$ of $\SO(V)$, and similarly we get the finite dimensionality of 
$\Ext^i_{\SO(W)}[\pi, \sigma]  $ for any irreducible representation $\pi$ of $\SO(V)$, and any $i \geq 0$.
\end{proof}

The proof of Theorem \ref{finite} uses Theorem \ref{kunneth} (Kunneth theorem) for
representations of $\GL_m(F) \times G'_k$. Since
for any two irreducible representations $V,V'$ of $\GL_m(F)$, $\EP_{\GL_m(F)}[V,V']=0$ unless $m=0$ cf. Lemma \ref{EPgln}, we obtain the following corollary of the proof of the theorem.

\begin{cor}
  For a principal series representation
  $\pi = \pi_0\rtimes \sigma_0$ of $\SO(V)$ where $\sigma_0$ is an admissible representation of $\SO(W)$, and $\pi'$ is an
  admissible representation of $\SO(V')$ where $W \subset V'\subset V$
  with $V'$ a nondegenerate codimension 1 
  subspace of the quadratic space $V$ with $\dim(V')-\dim(W)=2m-1$,
  $$\EP_{\SO(V')}[\pi,\pi'] = \EP_{\Bes(V', W)}[\pi',\sigma_0 ] \cdot \dim \Wh(\pi_0).$$ 
  \end{cor}

\noindent{\bf Definition:} A finite length representation $\pi$ of a classical group
will be called a {\it full principal series} if it is irreducible and
supercuspidal, or is
of the form $\pi = \pi_0 \rtimes \sigma_0$ with both $\pi_0$ and $\sigma_0$ irreducible, and $\sigma_0$ supercuspidal.

\vspace{2mm}

The following corollary is a consequence of the previous corollary together with the
fact that if $\sigma$ is a cuspidal representation of $\SO(W)$, then $\sigma \otimes \psi$ is an injective module for $\Bes(V,W)$.

\begin{cor}\label{epson}
 Let $\pi$ be a finite length
  representation  of $\SO(V)$,
    and $\pi'$ of $\SO(V')$ where $ V'\subset V$ is a  nondegenerate codimension 1 
  subspace of the quadratic space $V$. Assume that
  $\pi$  is a full principal series, and $\pi'$
  is an irreducible representation of $\SO(V')$. Then,
  $\EP_{\SO(V')}[\pi,\pi']$
  is either 0 or 1.
    If $\pi = \pi_0\rtimes \sigma_0$ of $\SO(V)$
    where $\sigma_0$ is an admissible representation of $\SO(W)$ with $\dim W \leq 1$,   $\EP_{\SO(V')}[\pi,\pi'] = \dim \Wh(\pi) \cdot \dim \Wh(\pi')$ (if $\dim W=1$, $\Wh(\pi')$ is for a particular character of a maximal unipotent subgroup of $\SO(V')$).
\end{cor}

\begin{remark}
In the previous corollary, we see a large number of cases when the Euler-Poincar\'e characteristic is 0 or 1. Is there a  multiplicity one result for $\EP$, or for $\Ext^i$?
  \end{remark}

\section{An integral formula of Waldspurger, and a  conjecture on E-P}

In this section we review an integral formula of Waldspurger which we then propose to be the 
integral formula for the Euler-Poincar\'e pairing for $\EP_{\Bes(V,W)}[\sigma,\sigma' ]$ for $\sigma$ 
any finite length
representation of $ \SO(V)$, and $\sigma'$ any finite length representation of $\SO(W)$,
where $V$ and $W$ are quadratic spaces over $F$ with 
$V = X + D + W + Y$  with $W$ a quadratic subspace of $V$ of codimension $2k+1$ 
with $X$ and $Y$ totally isotropic subspaces of $V$ in duality with each other under the underlying bilinear form,  
and $D$ an anisotropic line in $V$. Let $Z=X+Y$.

Let $\underline{\mathcal T}$ denote the set of elliptic tori $T$ in $\SO(W)$ such that there exist quadratic subspaces  $W_T,W'_T$ of $W$ such that:
\begin{enumerate}

\item $W= W_T \oplus W'_T$, and $V=W_T \oplus W'_T \oplus D \oplus Z$.

\item $\dim (W_T)$ is even, and $\SO(W'_T)$ and 
$\SO(W'_T \oplus D \oplus Z)$ are  quasi-split.

\item $T$ is a maximal (elliptic) torus in $\SO(W_T)$.

\end{enumerate}

Clearly the group $\SO(W)$ operates on $\underline{\mathcal T}$. Let
${\mathcal T}$ denote a set of orbits for this action of $\SO(W)$  on $\underline{\mathcal T}$. 
For our purposes we note the
most important elliptic torus $T= \langle e \rangle$ 
corresponding to $W_T= 0$.

For $\sigma$ an admissible representation of $\SO(V)$ of finite length, define a function 
$c_\sigma(t)$ for regular elements of a  torus $T$ belonging to $\underline{\mathcal T}$ by the germ
expansion of the character $\theta_\sigma(t)$ of $\sigma$ on the centralizer of $t$ in the Lie algebra 
of $\SO(V)$, and picking out `the' leading term. The semi-simple part of the centralizer of $t$ in the 
Lie algebra of $V$ is the Lie algebra of $\SO(W'_T\oplus D \oplus Z)$ which,  if $W'_T \oplus D \oplus Z$ 
has odd dimension, has a 
unique conjugacy class of regular nilpotent element, but if $W'_T \oplus D \oplus Z$ has even dimension, then although
there are several regular nilpotent conjugacy classes, there is one which is `relevant', and is what is used
to define $c_\sigma(t)$. Similarly, for $\sigma'$ an admissible representation of $\SO(W)$ of finite length, 
one defines a function 
$c_{\sigma'}(t)$ for regular elements of a  torus $T$ belonging to $\underline{\mathcal T}$ by the germ
expansion of the character $\theta_{\sigma'}(t)$ of $\sigma'$.

Define a function $\Delta_T$ on an elliptic torus $T$ belonging to
$\underline{\mathcal T}$ with $W= W_T \oplus W'_T$, by $\Delta(t) = |\det(1-t)|_{W_T}|_F,$ and let  $D^H$ denote the discriminant function 
on $H(F)$. For a torus $T$ in $H$, define the Weyl group $W(H,T)$ by the usual normalizer divided by the centralizer:
$W(H,T) = N_{H(F)}(T)/Z_{H(F)}(T)$.

The following theorem is proved by Waldspurger
in  \cite{wald:gp} and \cite{wald:gp1}. 
\begin{thm} 
Let $V = X + D + W + Y$ be a quadratic space over the non-archimedean local field $F$ with $W$ a quadratic subspace of codimension $2k+1$ as above. 
Then for any  irreducible admissible representation $\sigma$ of $\SO(V)$ 
and irreducible admissible representation $\sigma'$ of $\SO(W)$,  
$$\sum _{T \in {\mathcal T}} |W(H,T)|^{-1} \int_{T(F)} c_\sigma(t) c_{\sigma'}(t) D^H(t) \Delta^k(t) dt ,$$
is a finite sum of absolutely convergent integrals. (The Haar measure on $T(F)$ is normalized to have volume 1.) If either $\sigma$ is a supercuspidal representation of 
$\SO(V)$, 
and $\sigma'$ is arbitrary irreducible admissible representation of $\SO(W)$,
or both $\sigma$ and $\sigma'$ are tempered representations, then 
$$\dim \Hom_{\Bes(V,W)}[\sigma,\sigma' ] =  
\sum _{T \in {\mathcal T}} |W(H,T)|^{-1} \int_{T(F)} c_\sigma(t) c_{\sigma'}(t) D^H(t) \Delta^k(t)dt.$$
\end{thm}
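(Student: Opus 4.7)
The plan is to follow Waldspurger's strategy: relate the multiplicity $\dim \Hom_{\Bes(W;k)}[\sigma,\sigma'\otimes\psi]$ to a regularized pairing of matrix coefficients, and then evaluate that pairing geometrically via a Weyl integration formula combined with Harish-Chandra's local character expansion. Concretely, for matrix coefficients $f_\sigma$ of $\sigma$ and $f_{\sigma'}$ of $\sigma'$, define (after Waldspurger-style regularization when needed)
$$I(f_\sigma,f_{\sigma'}) = \int^{*}_{\Bes(W;k)/Z} f_\sigma(h)\, f_{\sigma'}(h_W)\, \overline{\psi(u)}\, dh,$$
where $h = h_W \cdot u$ decomposes via the Levi decomposition of $\Bes(W;k)=\SO(W)\cdot U$. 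For supercuspidal $\sigma$, matrix coefficients are compactly supported modulo center, so the integral converges absolutely; for the tempered case one invokes Harish-Chandra Schwartz-space estimates.

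On the spectral side, one argues that for suitably chosen vectors $I(f_\sigma,f_{\sigma'})$ computes $\dim \Hom_{\Bes(W;k)}[\sigma,\sigma'\otimes\psi]$, using multiplicity one in the Gross--Prasad setup together with an unfolding: the integral realizes the essentially unique Bessel functional and pairs it against its dual. On the geometric side, one applies the Weyl integration formula on $\SO(W)$ to the $h_W$-integration, producing a sum over conjugacy classes of tori of weighted orbital integrals of $\theta_\sigma(t)\theta_{\sigma'}(t)D^H(t)$; the residual integral of $\overline{\psi}$ along the $u$-directions contributes the Jacobian factor $\Delta^k(t)$. The twist by $\psi$ forces $t$ to act non-degenerately on the relevant unipotent quotient, and this condition selects precisely the tori in $\underline{\mathcal T}$: the even dimension of $W_T$ makes $T$ elliptic in $\SO(W_T)$, while the quasi-splitness of $\SO(W_T'\oplus D\oplus Z)$ guarantees a regular nilpotent compatible with $\psi$.

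Once localized to an elliptic $t\in T\in\underline{\mathcal T}$, one inserts the Harish-Chandra germ expansion
$$\theta_\sigma(t\exp X)\;\sim\;\sum_{\mathcal O} c_{\mathcal O}(\sigma,t)\,\hat{\mu}_{\mathcal O}(X),$$
and its analogue for $\theta_{\sigma'}$, taken along the regular nilpotent cone of the centralizer $\SO(W_T'\oplus D\oplus Z)$, respectively $\SO(W_T')$. The weight $\Delta^k(t)$ is calibrated so that as $t$ approaches the singular locus all but the leading, \emph{relevant} nilpotent coefficients are suppressed; the surviving pairing is $c_\sigma(t)\,c_{\sigma'}(t)\,D^H(t)\,\Delta^k(t)$, yielding the stated formula. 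Absolute convergence and finiteness of the outer sum then follow from boundedness of the germ coefficients against $D^H\Delta^k$ near singular elements, together with the compact support modulo center of the contributing orbital integrals.

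The main obstacle is threefold. First, in the tempered (non-supercuspidal) case one must justify absolute convergence of $I$ after regularization, which rests on delicate Harish-Chandra Plancherel estimates for matrix coefficients restricted to $\Bes(W;k)$. Second, when $\dim(W_T'\oplus D\oplus Z)$ is even there are several regular nilpotent orbits in the centralizer, and pinning down the \emph{relevant} one --- the one paired to the Bessel character $\psi$ --- requires a careful Shalika-germ analysis. Third, and most fundamentally, identifying the regularized spectral integral with the actual multiplicity in the tempered case demands a direct-integral decomposition of $\sigma|_{\Bes(W;k)}$ whose continuous part must be shown to contribute zero to $I$; this is the technical heart of Waldspurger's argument and the step least amenable to a short proof sketch.
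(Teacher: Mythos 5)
This theorem is not proved in the paper at all: the text immediately preceding the statement reads, ``The following theorem is proved by Waldspurger in \cite{wald:gp} and \cite{wald:gp1},'' and the paper simply quotes the result as external input. There is therefore no in-paper proof to compare your proposal against. What you have written is a reasonable high-level reconstruction of the strategy of Waldspurger's own (long and technical) proof --- matrix-coefficient integral suitably regularized, Weyl integration on $\SO(W)$ plus the $\psi$-twisted integration along $U$ to produce the $\Delta^k$ weight, and the Harish-Chandra local character expansion to extract the leading germ coefficients $c_\sigma(t)$, $c_{\sigma'}(t)$ --- and you correctly flag the genuine difficulties (absolute convergence in the tempered case, selecting the ``relevant'' regular nilpotent orbit in even-dimensional centralizers, and showing the continuous spectrum contributes zero). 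But this is a sketch of a very hard external argument, not a proof, and none of the heavy analytic lemmas it depends on are established. Since the paper uses the theorem as a black box, the appropriate conclusion here is: cite \cite{wald:gp}, \cite{wald:gp1} rather than attempt to reprove the result; your sketch would need to be expanded by roughly the length of those two papers to become a proof.
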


Given this theorem of Waldspurger, it is most natural to propose the following conjecture on
Euler-Poincar\'e pairing.

\begin{conj} \label{integral}
Let $V = X + D + W + Y$ be a quadratic space over the non-archimedean local field $F$ with $W$ a quadratic subspace of 
$V$ of codimension $2k+1$ as before.  Then for any  irreducible admissible representation $\sigma$ of $\SO(V)$ 
and irreducible admissible representation $\sigma'$ of $\SO(W)$,  

\begin{enumerate}
\item 
\begin{eqnarray*} 
\EP_{\Bes(V,W)}[\sigma, \sigma' ] &= & 
 \sum_i (-1)^i \dim \Ext^i_{\Bes(V,W)}[\sigma,\sigma'] \\ 
&= &  
\sum _{T \in {\mathcal T}} |W(H,T)|^{-1} \int_{T(F)} c_\sigma(t) c_{\sigma'}(t) D^H(t) \Delta^k(t) dt.
\end{eqnarray*}

\item If $\sigma$ and $\sigma'$ are  irreducible tempered representations, then
$\Ext^i_{\Bes(V,W)}[\sigma,\sigma'] = 0$ for $i > 0$.
  
\end{enumerate}

\end{conj}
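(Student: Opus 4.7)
The plan is to prove both parts simultaneously, reducing first from arbitrary irreducible admissible representations to the tempered case via a standard-module argument, and then handling tempered pairs by combining Waldspurger's formula with a vanishing result for higher $\Ext$'s.

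The finite dimensionality of Theorem 5.2, together with Proposition 2.2, ensures that $\EP_{\Bes(W;k)}[-,-\otimes\psi]$ is a well-defined $\Z$-bilinear pairing on the relevant Grothendieck groups, additive in short exact sequences. The key intermediate step, in analogy with Proposition 2.1(c), is to show that this pairing vanishes whenever $\sigma$ is properly parabolically induced in $\SO(V)$, or $\sigma'$ is properly parabolically induced in $\SO(W)$. For the $\sigma'$ side, Bernstein's second adjointness rewrites the higher Ext's as Ext's over a proper Levi of $\SO(W)$; a Kunneth decomposition in the spirit of Lemma 5.1, combined with the vanishing of Euler--Poincar\'e on $\GL_m(F)$-factors established in Section 3, should then force the vanishing. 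For the $\sigma$ side one filters $\sigma|_{\Bes(W;k)}$ according to the Bessel-orbit geometry on $P(F)\backslash\SO(V)$, following the decomposition already used in the proof of Theorem 5.2; each subquotient is expressible via Bessel induction from a smaller quadratic space, and its Euler--Poincar\'e pairing then vanishes by an induction on $\dim V$. Simultaneously one checks that the right-hand side of the conjectured formula also vanishes on properly induced representations, which is a standard consequence of the fact that the germ coefficients $c_\sigma$ at regular elliptic elements vanish for such $\sigma$, since elliptic tori do not meet proper parabolics. By the Langlands classification, both sides of the desired identity then reduce to the tempered case.

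In the tempered case, the theorem of Waldspurger quoted above directly evaluates $\dim \Hom_{\Bes(W;k)}[\sigma, \sigma' \otimes \psi]$ as the conjectured integral, so statement (1) follows from statement (2), the vanishing of $\Ext^i_{\Bes(W;k)}[\sigma, \sigma' \otimes \psi]$ for $i > 0$ with $\sigma,\sigma'$ tempered. For (2), my preferred approach is to first handle the discrete-series case using Schneider--Stuhler's equivariant coefficient systems on the Bruhat--Tits building of $\SO(V)$, now restricted and twisted by the Bessel character $\psi$; and then bootstrap to arbitrary tempered representations by realising them as summands of standard modules parabolically induced from discrete series of proper Levis, applying Bernstein's second adjointness (Theorem 2.8) to reduce to Euler--Poincar\'e pairings on Bessel groups in smaller quadratic spaces.

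The main obstacle is unambiguously the higher Ext vanishing itself. Waldspurger's analysis of the $\Hom$-space ultimately leans on Gan--Gross--Prasad style multiplicity-one results and a distributional uniqueness principle; no such Gelfand-pair mechanism is available at the level of derived functors. A resolution-theoretic attack would require constructing an acyclic coefficient system for $\Bes(W;k)$ that incorporates the character $\psi$ of $U$ and is compatible with the building of $\SO(V)$, and this appears to demand genuinely new ideas. Already the codimension-one case $\SO_4 \supset \SO_3$ with $\sigma,\sigma'$ both discrete series, where Conjecture 2 predicts that restriction is derived-exact on tempered pairs, looks like a serious test case for any proposed strategy, and I would attempt it first as a source of intuition before undertaking the general argument.
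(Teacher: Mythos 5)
This statement is Conjecture~2 of the paper, and the paper does not prove it: immediately after stating it, the paper only verifies the supercuspidal case (via projectivity/injectivity of supercuspidal representations restricted to $\Bes(W;k)$), checks consistency on a few examples, and explains that part~(2) is offered as ``the simplest possible explanation'' of Waldspurger's tempered formula. There is therefore no paper proof to compare your sketch against; what I can do is evaluate the sketch on its own terms, and there is a genuine gap that invalidates the reduction step.

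Your reduction to the tempered case rests on the claim, stated ``in analogy with Proposition~2.1(c),'' that $\EP_{\Bes(W;k)}[\sigma,\sigma'\otimes\psi]$ vanishes whenever $\sigma$ or $\sigma'$ is properly parabolically induced, together with the companion claim that the right-hand side of the formula also vanishes on induced data. Both claims are false. Proposition~2.1(c) concerns the Euler--Poincar\'e pairing between two finite-length representations of the \emph{same} reductive group; the Bessel pairing is a branching pairing across groups of different rank and has no such vanishing. The $\GL$-analogue of the very same pairing is computed in Section~3 of the paper to be $\EP_{\GL_n(F)}[\pi_1,\pi_2]=\dim\mathrm{Wh}(\pi_1)\cdot\dim\mathrm{Wh}(\pi_2)$, which is equal to $1$ (not $0$) when $\pi_1,\pi_2$ are irreducible principal series; and Example~3 in Section~6 records the corresponding expectation $\EP[\sigma,\sigma']=\dim\mathrm{Wh}(\sigma)\cdot\dim\mathrm{Wh}(\sigma')\neq 0$ for quasi-split $\SO_{n+1},\SO_n$ with $\sigma$ or $\sigma'$ induced from a Borel. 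As for the right-hand side, the tori $T\in\underline{\mathcal T}$ are elliptic tori inside $\SO(W_T)$ for \emph{small} subspaces $W_T\subset W$, not regular elliptic tori of $\SO(W)$ or $\SO(V)$. The dominant contribution comes from the trivial torus $T=\{e\}$ (corresponding to $W_T=0$), and $c_\sigma(e)$ is a leading coefficient of the local character expansion at the identity --- essentially a Whittaker-type dimension --- which is nonzero for generic, hence for parabolically induced, $\sigma$. The Harish-Chandra fact that characters of induced representations vanish on regular elliptic elements is not what is being used here, and it does not imply vanishing of these germ coefficients. So both sides are generically nonzero on induced representations and the claimed reduction collapses. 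If you want a Langlands-classification reduction, the right mechanism is not vanishing on induced data but compatibility of both sides with parabolic induction, in the spirit of Proposition~5.1 (which relates $\Ext$ for an induced $\sigma'$ to a Bessel $\Ext$ on a smaller orthogonal group); and even granting such compatibility, the central obstacle you correctly identify in your last paragraph --- the higher-$\Ext$ vanishing for tempered pairs --- remains open and is not addressed by the paper.
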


\begin{remark}Note that a supercuspidal representation of $\SO(V)$ is a projective object in the 
category of smooth representations of $\SO(V)$, and hence by Proposition 2.4, it remains a projective
object in the category of smooth representations of $\SO(W) \cdot U$. Therefore if $\sigma$ or $\sigma'$ is 
supercuspidal,  $\Ext^i_{\Bes(V,W)}[\sigma,\sigma'] = 0$ for $i > 0$. (We note that for a supercuspidal representation $\sigma'$ of $\SO(W)$, 
the representation $\sigma' \otimes \psi$ is an injective module in the category of smooth representations
of $\SO(W) \cdot U$.) Thus Waldspurger's theorem 
is equivalent to the
conjectural statement on Euler-Poincar\'e characteristic if $\sigma$ or $\sigma'$ is supercuspidal (except that it is not proved if $\sigma'$ is supercuspidal, but $\sigma$ is arbitrary). \
Part 2 of the conjecture is there as the simplest 
possible explanation of Waldspurger's theorem for tempered representations!
\end{remark}

\begin{example}
Assume that either $G=\SO_{n+1}(F)$  is a split group, and $\sigma$ is induced from a character of a Borel subgroup of $G$,
or $H= \SO_{n}(F)$ is a split group and  $\sigma'$  is induced from a character of a Borel subgroup of $H$. 
Then  the conjectural formula on Euler-Poincar\'e becomes
$\EP[\sigma, \sigma']= 1$ which is a consequence of Corollary \ref{epson}.
\end{example}

\begin{remark} We consider the {Waldspurger integral formula} as some kind of
  {Riemann-Roch theorem.} Recall that for
  $X$ a smooth projective variety with Todd class $T_X$, and for any coherent sheaf $\F$ on $X$ with
Chern class $c(\F)$, one has,
$$EP(X,\F) = \sum_i(-1)^i \dim H^i(X,\F) = \sum_i(-1)^i \dim \Ext^i(\OO_X,\F) = \int_X(T_X \cdot c(\F)).$$
In our case,  $ EP[\pi_1,\pi_2] = \sum_i (-1)^i \dim \Ext^i_H[\pi_1,\pi_2]$ 
is conjecturally expressed as 
$$ EP[\pi_1,\pi_2] = \int_X T_X \cdot c(\pi_1,\pi_2),$$
where $X$ is a certain set of elliptic tori in $H$, $T_X$ is a function on this set of elliptic tori, and $c(\pi_1,\pi_2)$ is a function
on these elliptic tori defined in terms of the germ expansion of $\pi_1$ and $\pi_2$.
\end{remark}

\section{The Schneider-Stuhler duality theorem} \label{section10}
The following theorem is a mild generalization of a
duality theorem of Schneider-Stuhler due to M. Nori and this author in \cite{NP}; it turns
questions on $\Ext^i[\pi_1,\pi_2] $ to $\Ext^j[\pi_2,\pi_1]$, and therefore is of central importance to our theme in this paper.

\begin{thm} \label{SS}
  Let $G$ be a reductive $p$-adic group, and $\pi$ an irreducible, admissible representation of $G$. 
Let $d(\pi) $ be the   largest integer $i \geq 0$  such that there is an irreducible, admissible representation 
 $ \pi'$  of $G$  with ​
$\Ext^i_G[\pi,\pi']$  
nonzero​.  Then,

\begin{enumerate}
\item There is a unique irreducible representation $\pi'$ of $G$ with $\Ext^{d(\pi)}_G[\pi,\pi'] \not = 0.$   
\item The representation $\pi'$ in $(1)$ 
is nothing but $D(\pi)$  where $D(\pi)$ is the Aubert-Zelevinsky involution of $\pi$, and $d(\pi)$ is the split rank of the Levi subgroup $M$ of $G$ which carries the cuspidal support of $\pi$. 
\item $\Ext_G^{d(\pi)}[\pi, D(\pi)] \cong \C$. 
\item For any smooth  representation $\pi'$ of $G$, the bilinear pairing
$$(*) \,\,\,\,\,   \Ext^{i}_G[\pi, \pi']  \times \Ext^{j}_G[\pi', D(\pi)]  \rightarrow 
\Ext^{i+j = d(\pi)}_G[\pi, D(\pi)]   \cong \C, $$
is nondegenerate in the sense that if $\pi' = \displaystyle{\lim_{\rightarrow}\, } \pi'_n$ of
finitely generated $G$-sub-modules $\pi'_n$,
then $\Ext^{i}_G[\pi, \pi'] =\displaystyle{\lim_{\rightarrow}\, }
\Ext^{i}_G[\pi, \pi_n']$,
a direct limit of finite dimensional vector spaces over $\C$, and 
$\Ext^{j}_G[\pi', D(\pi)]  = \displaystyle{\lim_{\leftarrow}\, } \Ext^{j}_G[\pi'_n, D(\pi)]  $, an inverse limit
of finite dimensional vector spaces over $\C$, and the pairing  in $(*)$ is the direct limit of
perfect pairings on these finite dimensional spaces:
$$\Ext^{i}_G[\pi, \pi_n']  \times \Ext^{j}_G[\pi'_n, D(\pi)]  \rightarrow 
\Ext^{i+j = d(\pi)}_G[\pi, D(\pi)]   \cong \C.$$
(Observe that a compatible family of perfect pairings on finite dimensional vector spaces
$B_n: V_n \times W_n \rightarrow \C$ with $V_n$ part of an inductive system, and $W_n$ part of a projective system, gives rise to a natural pairing $B :
\displaystyle{\lim_{\rightarrow}\, }V_n \times \displaystyle{\lim_{\leftarrow}\, } W_n \rightarrow \C$
such that the associated homomorphism
from $(\displaystyle{\lim_{\rightarrow}\, }V_n)^\star$ to  $\displaystyle{\lim_{\leftarrow}\, } W_n$ is an isomorphism.
\end{enumerate}
\end{thm}

As an example, the 
following proposition giving complete classification
of irreducible submodules $\pi$ of the tensor product $\pi_1 \otimes \pi_2$ of two (irreducible, infinite dimensional)
representations $\pi_1,\pi_2$ of $\GL_2(F)$ with the product of their central characters trivial, is essentially a  translation of vanishing of
$\Ext^1_{PGL_2(F)}[\pi_1 \otimes \pi_2,\pi_3]$
by this duality theorem. The vanishing itself follows because 
$\Ext^2_{PGL_2(F)}[\pi_1 \otimes \pi_2,\pi_3]=0$ by Proposition \ref{ext-ps}, 
and both $\EP$ and Hom spaces have the same dimension.

\begin{prop}
Let $\pi_1, \pi_2$ be two irreducible admissible infinite dimensional 
representations of $\GL_2(F)$ with product of their
central characters trivial. Then the following is the complete list of irreducible sub-representations
$\pi$ of $\pi_1 \otimes \pi_2$ as $\PGL_2(F)$-modules.

\begin{enumerate}
\item $\pi$ is a supercuspidal representation of $\PGL_2(F)$, and appears as a quotient of $\pi_1 \otimes \pi_2$.

\item $\pi$ is a twist of the Steinberg representation, which we assume by absorbing the twist in 
$\pi_1$ or $\pi_2$ to be the Steinberg representation $\St$ of $\PGL_2(F)$. Then $\St$ is a 
submodule of $\pi_1 \otimes \pi_2$ if and only if 
$\pi_1,  \pi_2$ are both irreducible
principal series representations, and $\pi_1 \cong \pi_2^\vee$.
\end{enumerate}
\end{prop}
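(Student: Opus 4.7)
The plan is to handle the two parts of the proposition separately. Part (1) will follow from general category-theoretic considerations about supercuspidal representations, while part (2) requires a detailed case analysis on the types of $\pi_1$ and $\pi_2$.

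For part (1), any irreducible supercuspidal representation $\pi$ of $\PGL_2(F)$ is simultaneously projective and injective in $\RR(\PGL_2(F))$. Consequently, any short exact sequence $0 \to \pi \to \pi_1\otimes\pi_2 \to M \to 0$ splits by the injectivity of $\pi$, realizing $\pi$ as a direct summand of $\pi_1\otimes\pi_2$ and in particular as a quotient; the converse follows dually from projectivity. The statements ``$\pi$ is a submodule'' and ``$\pi$ is a quotient'' thus coincide, and the characterization of supercuspidal quotients of $\pi_1\otimes\pi_2$ is exactly the content of the main theorem of \cite{prasad1} (as used throughout section 7).

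For part (2), after absorbing any twist into $\pi_1$, the task reduces to computing $\Hom_{\PGL_2(F)}[\St,\pi_1\otimes\pi_2]$ by case analysis. The main tools are the section~7 exact sequence
\[
0 \to \mathrm{ind}_T^{\PGL_2(F)}(\chi_1\chi_2') \to \pi_1\otimes\pi_2 \to \mathrm{Ind}_B^{\PGL_2(F)}(\chi_1\chi_2\delta^{1/2}) \to 0,
\]
valid when $\pi_1,\pi_2$ are both principal series, together with the resolution $0 \to \C \to \mathrm{Ps}(|\cdot|^{-1/2},|\cdot|^{1/2}) \to \St \to 0$, which converts questions about $\Hom_{\PGL_2(F)}[\St,\,\cdot\,]$ into a long exact sequence involving the simpler modules $\C$ and the reducible principal series. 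If $\pi_1$ or $\pi_2$ is supercuspidal, then $\pi_1\otimes\pi_2$ is compactly induced and hence projective in $\RR(\PGL_2(F))$; combining this with the resolution above (and the vanishing of the relevant $\Ext^1$ from that side) yields $\Hom_{\PGL_2(F)}[\St,\pi_1\otimes\pi_2]=0$. If one of $\pi_1,\pi_2$ is a Steinberg twist, we apply the same resolution to that factor to reduce the computation to the principal-series--principal-series case.

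The main obstacle is the remaining case of two principal series. Here the problem boils down to determining, for a character $\phi$ of $T=F^\times$, when $\St$ embeds into $\mathrm{ind}_T^{\PGL_2(F)}(\phi)$; no Frobenius reciprocity is directly available for $\Hom$ into a compactly induced module. The plan is to proceed geometrically by realizing $\mathrm{ind}_T^{\PGL_2(F)}(\mathbf{1})$ as $C_c^\infty(\PP^1(F)\times\PP^1(F) - \Delta)$ sitting inside the ambient $C^\infty(\PP^1(F)\times\PP^1(F))$, which realizes a tensor product of reducible principal series. From this geometric model one can extract the Steinberg contribution and verify that $\St$ appears as a subrepresentation precisely when $\phi=\mathbf{1}$, i.e., when $\chi_1\chi_2'=\mathbf{1}$, which is exactly equivalent to $\pi_1 \cong \pi_2^\vee$. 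This gives the embedding required for the ``$\Leftarrow$'' direction; the ``$\Rightarrow$'' direction follows by ruling out any nontrivial $\phi$ via the same analysis and by returning, in the reducible situations, to the $\St$-resolution reductions carried out in the previous step.
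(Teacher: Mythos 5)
Your part~(1) argument is clean and correct: a supercuspidal representation of $\PGL_2(F)$ is both projective and injective in $\RR(\PGL_2(F))$, so its appearance as a submodule of $\pi_1\otimes\pi_2$ is equivalent to its appearance as a quotient, and the latter is classified in \cite{prasad1}. The paper does not spell this out, but your justification is exactly right.

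For part~(2) your overall strategy tracks the paper's, but there is a genuine gap in the subcase where one of $\pi_1,\pi_2$ is supercuspidal. You assert that projectivity of $\pi_1\otimes\pi_2$ in $\RR(\PGL_2(F))$, ``combined with the resolution $0\to\C\to\mathrm{Ps}\to\St\to 0$ and the vanishing of the relevant $\Ext^1$,'' gives $\Hom_{\PGL_2(F)}[\St,\pi_1\otimes\pi_2]=0$. This does not follow: projectivity of $\pi_1\otimes\pi_2$ controls only the covariant functor $\Hom_{\PGL_2(F)}[\pi_1\otimes\pi_2,-]$ (i.e.\ $\Ext^{i}[\pi_1\otimes\pi_2,-]=0$ for $i>0$), and says nothing about maps \emph{into} $\pi_1\otimes\pi_2$. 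The long exact sequence from the Steinberg resolution embeds $\Hom[\St,\pi_1\otimes\pi_2]$ into $\Hom[\mathrm{Ps},\pi_1\otimes\pi_2]$, so what you need is the vanishing of the latter; that is precisely the nontrivial input, and it requires an actual computation. The paper obtains it via Bernstein's second adjointness, $\Hom_{\PGL_2(F)}[\mathrm{Ind}_B\chi,\pi_1\otimes\pi_2]\cong\Hom_T[\chi,(\pi_1\otimes\pi_2)_{N^-}]$, and then by showing with Kirillov models that $(\pi_1\otimes\pi_2)_{N^-}\cong\Sc(F^\times)$ as a $T$-module, which admits no character as a subrepresentation. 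Your argument as written omits this computation. (Moreover, beware that the projectivity premise is a red herring here, and that the $\Ext^1$ need not vanish in this regime: when both $\pi_1$ and $\pi_2$ are supercuspidal with $\pi_1\cong\pi_2^\vee$ the paper finds $\Ext^1_{\PGL_2(F)}[\St,\pi_1\otimes\pi_2]\cong\C$.)

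The remaining principal-series analysis and the identification of $\phi=\mathbf{1}$ (equivalently $\pi_1\cong\pi_2^\vee$) as the condition for $\St\hookrightarrow\ind_T^{\PGL_2(F)}\phi$ is the same geometric route the paper takes, though your sketch is thin where the paper uses the Bruhat decomposition of $T\backslash\PGL_2(F)/N^-$ to pin down exactly which characters of $T$ can appear as submodules of $(\ind_T^{\PGL_2(F)}\C_\mu)_{N^-}$. Similarly, ``apply the resolution to the Steinberg factor'' for the case of Steinberg twists is a legitimate reduction in outline, but it only gives an inclusion of Hom spaces, and you would still need to decide equality rather than merely bound the dimension. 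Filling in the Jacquet-module computation for the supercuspidal subcase and the Bruhat-decomposition analysis for $\ind_T^{\PGL_2(F)}\mu$ would make the argument complete.
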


\section{Geometrization of Ext groups} \label{geometrization}

A natural way to construct exact sequences
in representation theory  is via the Bernstein-Zelevinsky exact sequence arising from the 
inclusion of an open set $X-Y$ in a topological space $X$ equipped with an $\ell$-sheaf $\frak F$, with $Y$ a closed subspace of $X$, giving rise to 
$$0\rightarrow \Sc(X-Y, {\frak F})\rightarrow \Sc(X, {\frak F}) \rightarrow \Sc(Y, {\frak F})\rightarrow 0.$$
Observe that in this exact sequence, the larger space $\Sc(X-Y, {\frak F})$ arises as a subspace, whereas the smaller space
$\Sc(Y, {\frak F})$ arises as a quotient of $\Sc(X, {\frak F})$.
Assuming that a group $G$ operates on the space $X$, preserving the closed
subspace $Y$, as well as the 
sheaf ${\frak F}$, then 
this exact sequence gives rise to an element of
$\Hom_G[\Sc(X, {\frak F}),  
\Sc(Y, {\frak F})]$, 
as well as an element of $\Ext^1_G[\Sc(Y, {\frak F}),  \Sc(X-Y, {\frak F})]$. Note that the Hom is from a larger space $\Sc(X, {\frak F})$
to a smaller space $\Sc(Y, {\frak F})$, whereas the Ext is between a smaller space and a larger space.  

Similarly, if $X_2,X_1$ 
are closed subsets of an $\ell$-space $X$ with $X_2 \subset X_1 \subset X$, and endowed with   an 
$\ell$-sheaf $\frak F$, we have exact sequences,

$$
\begin{array}{cccccccc}
0 & \rightarrow & \Sc(X-X_1, {\frak F}) & \rightarrow & \Sc(X-X_2, {\frak F}) & \rightarrow & \Sc(X_1-X_2, {\frak F}) & \rightarrow 0,\\

\\

0 &\rightarrow & \Sc(X_1-X_2, {\frak F}) & \rightarrow & \Sc(X_1, {\frak F}) & \rightarrow & \Sc(X_2, {\frak F}) & \rightarrow 0,
\end{array}
$$
which can be spliced together to give rise to the exact sequence,
$$0\rightarrow \Sc(X-X_1, {\frak F})\rightarrow \Sc(X-X_2, {\frak F}) \rightarrow  \Sc(X_1, {\frak F}) \rightarrow \Sc(X_2, {\frak F})\rightarrow 0,$$
which gives an element of $\Ext^2_G[\Sc(X_2, {\frak F}),  \Sc(X-X_1, {\frak F})]$;  so as the representation $\pi_2 = \Sc(X_2, {\frak F})
$ becomes smaller and smaller 
compared to $\pi_1 = \Sc(X-X_1, {\frak F})$ (as the space $X_2$ is `two step smaller' than $X$), 
it may be  expected to contribute to higher and higher Ext groups $\Ext_G^i[\pi_2,\pi_1]$.

Various examples around the present work suggest that homomorphisms between representations, or extensions
between them correspond to some geometric spaces (and $\ell$-sheaves on them)
as above, in particular,
a typical homomorphism  is from a 
larger space to smaller ones, whereas a typical Ext is the other way around!

Although most geometric spaces have algebraic geometric origin, that is not
necessarily the case when thinking about the Bernstein-Zelevinsky exact sequence.
For instance, one can use the action of $G$ on its Bruhat-Tits building and its various compactifications. If $X$ is the tree associated to $\PGL_2(F)$, then one knows that there is a 
compactification $\overline{X}$ of $X$ on which $\PGL_2(F)$ continues to act  with 
$\overline{X}-X 
= \PP^1(F)$, a closed subset of $\overline{X}$. 
The zero-skeleton $X^0$ of $X$ together with $\overline{X}-X $ is a compact topological 
space, call it $\overline{X}_0
$, with an action of $\PGL_2(F)$ with two orbits: $X^0$ which is the open orbit, and $\PP^1(F)$ which is the closed orbit.


An unramified character $\chi$ of $B$ gives rise to a sheaf, say $\C_\chi$, on
$\PP^1(F)$, which can be extended to a $\PGL_2(F)$-equivariant sheaf on
$\overline{X}_0$ by making it
$\mbox{\ind}_{\PGL_2({\mathcal O}_F)}^{ \PGL_2(F)} \C $ on $X^0$.
Call the extended
sheaf on $\overline{X}_0$ also as $\C_\chi$; note that the restriction
of $\C_\chi$ to $X^0$ is the constant sheaf $\C$. Thus we have an 
exact sequence,

$$0\rightarrow \Sc(X^0, \C)\rightarrow \Sc(\overline{X}_0, \C_\chi) \rightarrow \Sc(\PP^1(F), \C_\chi)\rightarrow 0.$$

Since $ \PGL_2(F)$ acts transitively on $X^0$ with stabilizer 
$\PGL_2({\mathcal O}_F)$, we have
$\Sc(X^0, \C) \cong  \mbox{\ind}_{\PGL_2({\mathcal O}_F)}^{ \PGL_2(F)} \C$,
 we  have constructed an element of  
$\Ext^1_{\PGL_2(F)}[\mbox{\Ind}_B^{\PGL_2(F)}\chi , 
\mbox{\ind}_{\PGL_2({\mathcal O}_F)}^{ \PGL_2(F)} \C 
] .$

It may be hoped that many extensions which representation theory offers will be matched by geometric action of $G$ 
on topological spaces with finitely many $G$-orbits coming either from algebraic geometric spaces, or from the Bruhat-Tits building and its compactifications; there is then also the issue of proving that geometric 
actions do give non-trivial extensions!

We end with a precise question but before that we need to make a  definition.
In what follows, groups and spaces are what are called $\ell$-groups and $\ell$-spaces in \cite{bz1} and \cite{ber}; we will also use the notion of an $\ell$-sheaf
from these references which we recall is a  sheaf, say ${\mathcal F}$,
over $X$ of vector spaces
over $\C$ with the space of compactly supported
global sections $\Sc(X,{\mathcal F})= {\mathcal F}_c(X)$, a module over $\Sc(X)$, which is
nondegenerate in the sense that $\Sc(X) \cdot {\mathcal F}_c(X)={\mathcal F}_c(X)$; the functor ${\mathcal F} \rightarrow {\mathcal F}_c(X)$ gives an equivalence between $\ell$-sheaves on $X$ and nondegenerate modules over $\Sc(X)$.
\vspace{4mm}

\noindent{\bf Definition:}
A complex representation $V$ of $G=G(F)$ is said 
to be of geometric origin if
\begin{enumerate}

\item there is a $G$-space $X_V$ with finitely
  many $G$-orbits,

\item  a $G$-equivariant sheaf ${\mathcal F}_V$ on $X_V$,

  \end{enumerate}
such that on each $G$-orbit $Y\subset X_V$ of the form $G/H_Y$, ${\mathcal F}_V|_Y$
is the equivariant sheaf associated to a finite dimensional representation $W_Y$
of $H_Y$,
and $V\cong \Sc(X_V,{\mathcal F}_V)$ (cf. \S1.16 of \cite{bz1} for the definition of the restriction of an $\ell$-sheaf to a locally closed subset such as $Y$).

\begin{example}
      Parabolic induction and Jacquet functor take representations of geometric origin to representations of geometric origin. It is expected that all
      supercuspidal representations are of geometric origin (proved for $\GL_n(F)$ and classical groups in odd residue characteristic). In understanding the class of representations of geometric origin given by the action of a group $G$ on a
      space $X$, one difficulty seems to be to glue vector bundles
      on various orbits to an $\ell$-sheaf on $X$.  
\end{example}

\begin{question}

Suppose that we have two complex
smooth representations $V_1$ and $V_2$ of $G=G(F)$ of
geometric origin with $\Ext^1_G[V_1,V_2] \not = 0$ with $V \in
\Ext^1_G[V_1,V_2]$ represented by the extension
$$0\rightarrow V_2\rightarrow V \rightarrow V_1\rightarrow 0.$$
Then is $V$ of geometric origin?
\end{question}

\begin{remark}
  A representation of {\it geometric origin} comes equipped
  with considerable additional data as in the Bernstein-Zelevinsky exact sequence,
  which may be important to refine the question above.
  \end{remark}

\begin{bibdiv}
\begin{biblist}

\bib{sayag}{article} {

  author = {A. Aizenbud}
  author = {E. Sayag}
  title={Homological multiplicities in representation theory of p-adic groups }
  journal = {arXiv:1709.09886}
  date = {2017}
}
  
\bib{multiplicity}{article} {
author={A. Aizenbud}
author={ D. Gourevitch}
author ={ S. Rallis}
author={ G. Schiffmann}
title = {Multiplicity One Theorems}, 
journal = {Annals of Mathematics},
volume= {172, no. 2}
date = {2010}
pages = {1407-1434}
}

\bib{ber}{article}{
author= {Bern{\v {s}}te{\u \i }n, I. N.},
title={Representations of $p$-adic groups}, 
journal={Written by Karl E. Rumelhart, Harvard University},
date={Fall 1992}

}

\bib{bz1}{article}{
  author={Bern{\v {s}}te{\u \i }n, I. N.},
  author={Zelevinski{\u \i }, A. V.},
  title={Representations of the group $\GL(n,F)$ where $F$ is a non-Archimdean local field},
  journal={Russian Math Surveys},
  volume={31},
  date={1976},
  number={3},
  pages={1-68}
}

\bib{raphael1}{article}{
  author={R. Beuzart-Plessis   },
  
  title={La conjecture locale de Gross-Prasad pour les repr\'sentations temp\'er\'ees des groupes unitaires }
  journal={ M\'em. Soc. Math. Fr. (N.S.)  }

volume ={149}
  date={2016},
  
  pages={vii+191 pages},
  }

\bib{raphael2}{article}{
  author={R. Beuzart-Plessis   },
  
  title={Endoscopie et conjecture raffin\'ee de Gan-Gross-Prasad pour les groupes unitaires }
  journal={Compositio Math}

volume ={151, No. 7}
  date={2015}
  
  pages={1309--1371}
  }

\bib{raphael3}{article}{
  author={R. Beuzart-Plessis   },
  
  title={Expression d'un facteur epsilon de paire par une formule int\'egrale }
  journal={Canad. J Math }

volume ={66, No. 5}
  date={2014}
  
  pages={993--1049}}

\bib{savin}{article}{
author = {K. Y. Chan}
author={G. Savin}
title = {Bernstein-Zelevinsky derivatives, branching rules and Hecke algebras}
journal = {arXiv:1605.05130}
date={2017}
}

\bib{ggp}{article}{
author={W. T. Gan}
author={B. Gross}
author={D. Prasad}
title={Symplectic local root numbers, central critical $L$-values, and restriction problems in the representation theory of classical groups}
 volume={ 346}, 
journal={Ast\'erisque}
pages={1-109}
date={2012}
}

\bib{kobayashi}{article}{
  author = {T. Kobayashi}
  title = {Discrete decomposability of the restriction of $A_{\mathfrak q}(\lambda)$
    with respect to reductive subgroups and its applications.}
  journal = {Invent. Math.}
  volume = {117, no. 2}
  date= {1994}
  pages= {181--205}
}

\bib{NP}{article}{
author = {M.Nori}
  author = { D. Prasad}
  title={On a duality theorem of Schneider-Stuhler}
  journal={arXiv:1711.01908}    
date = {2017},
}

\bib{prasad1}{article}{
author = {D.  Prasad}
title={Trilinear forms for representations of ${\rm GL}(2)$ and local $\epsilon$-factors},
journal = {  Compositio Math},
volume = {  75},
date = {1990},
number= { 1}
pages=  {1 \ndash 46},
}

\bib{prasad2}{article}{
author = {D. Prasad}
title={On the decomposition of a representation of ${\rm GL}(3)$ restricted to ${\rm GL}(2)$ over a $p$-adic field},
journal = {  Duke Math. J.},
volume = {  69},
date = {1993},
number= { 1},
pages=  {167 \ndash 177},
}

\bib{raghuram}{article}{
author = { A. Raghuram}
title={A Kunneth theorem for p-adic groups}
journal = { Canad. Math. Bull.}
volume = {  50},
date = {2007},
number= { 3},
pages=  {440 \ndash 446},
}

\bib{schneider-stuhler:sheaves}{article}{
  author={P.Schneider},
  author={U.Stuhler},
  title={Representation theory and sheaves on the Bruhat--Tits building},
  journal={Inst. Hautes \'Etudes Sci. Publ. Math.},
  number={85},
  date={1997},
  pages={97\ndash 191},
}

\bib{sun-zhu}{article}{
author = {B. Sun}
author ={ C. Zhu} 
title = {Multiplicity one theorems: the archimedean case}  
journal = {Annals of Mathematics}
number = {175, No 1  (2012), 23-44.}

}

\bib{wald:gp}{article}{
author = {J.-L.Waldspurger} 
title = {  Une Formule int\'egrale reli\'ee \`a la conjecture locale de Gross-Prasad,} 
journal = {Compositio Math.}
number = {146 (2010), no. 5, 1180-1290.}

}

\bib{wald:gp1}{article}{
author = {J.-L.Waldspurger} 
title = {  Une Formule int\'egrale reli\'ee \`a la conjecture locale de Gross-Prasad, 2\`eme partie: extension aux repr\'{e}sentations temp\'er\'ees}, 
journal = {Ast\'erisque}

number = {346(2012)}

pages = {171-312}
date = {2012}
}

\bib{wald:gp2}{article}{
author = {J.-L.Waldspurger} 
title = {  La conjecture locale de Gross-Prasad pour les 
repr\'esentations temp\'er\'ees des groupes sp\'eciaux orthogonaux},
journal = {Ast\'erisque}

number = {347(2012)}
pages = {103-165}
}

\bib{wald:gp3}{article}{
author = {C. Moeglin and J.-L.Waldspurger} 
title = {  La conjecture locale de Gross-Prasad pour les 
 groupes sp\'eciaux orthogonaux: le cas g\'en\'eral},
journal = {Ast\'erisque}

number = {347(2012)}
pages = {167-216}
}


\end{biblist}
\end{bibdiv}
\end{document}